\newcommand{\doublecurly}[1]{%
  \mathopen{\scalebox{1.0}{$\lbrace\!\mkern-1mu\lbrace$}}#1\mathclose{\scalebox{1.0}{$\rbrace\!\mkern-1mu\rbrace$}}%
}
\newcommand{\dc}[1]{\doublecurly{#1}}
\def\softd{{\leavevmode\setbox1=\hbox{d}%
		\hbox to 1.05\wd1{d\kern-0.4ex{\char039}\hss}}}
\newcommand{\brho}{\bar \rho}
\newcommand{\bc}{\bar c}
\newcommand{\grid}{\mathcal{T}}
\newcommand{\jump}[1]{\ensuremath{\left[\![ #1\right]\!]}}
\newcommand {\startenv} {\vskip 0.05em
\begin{tabular}{||l}\parbox[t]{0.95\linewidth}}
\newcommand {\stopenv} {\end{tabular}\vskip 0.05em}
\newtheorem{theorem}{Theorem}[section]
\newtheorem{definition}[theorem]{Definition}
\newtheorem{lemma}[theorem]{Lemma}
\newtheorem{corollary}[theorem]{Corollary}
\newtheorem{assumption}[theorem]{Assumption}
\newtheorem{remark}[theorem]{Remark}
\newenvironment{prooflemma}[1]
  {\proof}
  {\endproof}
\newcommand{\keywords}[1]{\textbf{Keywords:} #1}
\author[1]{Jan Giesselmann \thanks{jan.giesselmann@tu-darmstadt.de}}
\author[2]{Kiwoong Kwon\thanks{Corresponding author; kwkwon@knu.ac.kr} }
\date{}
\affil[1]{\centering Department of Mathematics\\ Technical University of Darmstadt\\ Dolivostr. 15, 64293 
Darmstadt, Germany}
\affil[2]{\centering Department of Mathematics\\ Kyungpook National University\\ Daegu, Republic of Korea}
\title{A posteriori error control for a discontinuous Galerkin approximation of a Keller-Segel model}
\begin{document}

\maketitle

\begin{abstract}
We provide a posteriori error estimates for a discontinuous Galerkin scheme for the parabolic-elliptic Keller-Segel system in 2 or 3 space dimensions. The estimates are conditional in the sense that an a posteriori computable quantity needs to be small enough - which can be ensured by mesh refinement - and optimal in the sense that the error estimator decays with the same order as the error under mesh refinement. A specific feature of our error estimator is that it can be used to prove the existence of a weak solution up to a certain time based on numerical results.
\end{abstract}

\keywords{Keller-Segel; chemotaxis; nonlinear diffusion; discontinuous 
Galerkin scheme; a posteriori error analysis}

{\bf AMS subject classification} (2020):
Primary 65M50,
Secondary 65M08, 35K40

\section{Introduction}
The Keller-Segel system is a well-known mathematical model in the field of chemotaxis.
Introduced by E. Keller and L. Segel \cite{Keller_1970}, it describes the collective motion of cells in response to concentration gradients. It consists of two coupled partial differential equations:
\begin{equation} \label{intro:pp_eqn}
    \begin{matrix}
        \begin{aligned}
            \partial_t\rho +\nabla\cdot(\chi\rho\nabla c - \nabla \rho) &= 0 \\
            \epsilon\partial_t c + c - \Delta c - \alpha\rho &= 0
        \end{aligned}
    \end{matrix}
\end{equation}
and suitable initial and boundary conditions.
Here $\rho=\rho(t,x)$ and $c=c(t,x)$ denote the density of the cell population and the concentration of chemically attracting substances, respectively, for any  time $t>0$ at location $x \in \Omega$.
We will consider the problem on a bounded open set $\Omega$ in $\mathbb{R}^d$ with a piecewise smooth boundary $\partial \Omega$. The parameters $\chi$, $\alpha$, and $\epsilon$ are constant and  satisfy $\chi, \alpha >0$ and $\epsilon\geq 0$.  The modelling background of this system has been extensively discussed in the literature \cite{Hillen2008, Murray2002}, and there are many extensions, e.g., models accounting for multiple stimuli and  multiple species  \cite{2002Wolansky, 2010Horstmann, 2020Karmakar}.

In this work, we focus on the parabolic-elliptic form of the Keller-Segel system described by the partial differential equations \eqref{intro:pp_eqn} when $\epsilon = 0$ (and setting the other parameters to $1$ for simplicity):
\begin{eqnarray} 
 \partial_t\rho +\nabla\cdot(\rho\nabla c - \nabla \rho) &= 0 \quad\text{in} \ (0,T)\times\Omega  \label{eq:KS1}\\
            c - \Delta c &= \rho\quad 
    \text{in} \ (0,T)\times\Omega \label{eq:KS2}\\
 \nabla \rho \cdot \mathrm{n} &=0\quad \text{on} \ (0,T) \times \partial \Omega\label{eq:bc1}\\
\nabla c \cdot \mathrm{n} &=0 \quad \text{on} \ (0,T) \times\partial \Omega \label{eq:bc2}\\
\rho(0, \cdot) &= \rho_0  \quad  \text{in} \ \Omega \label{eq:ic}
\end{eqnarray}
where $\mathrm{n}$ denotes the outward unit normal vector.

A striking feature of the Keller-Segel system which motivates detailed investigations is the finite time blow-up of solutions \cite{Nagai1995}. Such singularities are expected in settings of dimension $d \geq 2$  when the initial cell population surpasses a certain threshold. For $d=2$, this threshold is on the total mass \cite{Gajewski_1998, Blanchet2006} and for $d \geq 3$, it depends on the maximal local concentration. For an overview on the Keller-Segel system, we refer to the review papers \cite{Horstmann2003, Horstmann2004}.

For special initial data explicit formulas for blow up solutions exist but, in general, approximating  solutions of this system with blow-up accurately is challenging. Several numerical methods have been developed over the years to tackle this issue. These methods include a fractional step method \cite{Tyson2000}, finite volume schemes \cite{Filbet2006, 2008Chertock}, and finite-element methods \cite{Saito_2007, 2013Strehl}. Additionally, \cite{Epshteyn2008/09, Li2017} employed discontinuous Galerkin methods, and \cite{2005Budd} utilized a moving mesh method.

More generally, solutions to chemotaxis systems frequently display strong growth of cell density close to some point or curve. Indeed, the system may not only exhibit blow-up but it may also develop other `spiky' structures.
Resolving such highly localized structures on uniform meshes is, arguably, inefficient and there has been an intense interest in the development of (mesh) adaptive  numerical schemes \cite{Chertock2019, Sulman2019}
with the goal of increasing  accuracy  and efficiency.
Several heuristic strategies for mesh adaptation can be found in the literature. For example, \cite{Sulman2019} presents an adaptive moving mesh finite element method that uses a coordinate transformation to concentrate grid nodes in regions of large solution variations. Similarly, \cite{Chertock2019} proposes a semi-discrete adaptive moving mesh finite-volume upwind method, which enhances resolution in blow-up regions by increasing the density of mesh nodes.
To the best of our knowledge, mesh adaptation based on a posteriori error estimates has not been investigated. The goal of this work is to provide a basis for such investigations. At the same time, our results provide rigorous error control of simulations.

Mesh adaptation  based on a posteriori error estimates has been extremely successful for simpler types of PDEs, even leading to provably optimal meshes in certain cases. For instance, convergence of an adaptive space-time finite element method that relies on an a posteriori error estimator for solving linear parabolic partial differential equations  was proven in \cite{Kreuzer_2012}. A posteriori error estimates come in several varieties: If there is a goal functional of specific interest, dual weighted residuals may be used (see, e.g., \cite{Becker2001}), which usually leads to very efficient meshes.
We follow a different approach that aims at controlling the error in a suitable norm and is nicely explained in \cite{Makridakis2007}. The main idea is to insert a (sufficiently regular) reconstruction of the numerical solution into the PDE so that a suitable stability theory can be used to bound the difference between the exact solution to the PDE and this reconstruction. For nonlinear equations it may happen that the stability theory allows only \emph{conditional} a posteriori error estimates, e.g., Allen-Cahn equation \cite{Bartels}, and this is indeed the case here.
 
Different stability frameworks, e.g., ones based on (relative) entropy or negative order norms, could be considered for the Keller-Segel system. We  present an a posteriori error estimate using a rather standard $L^2$-based stability framework since this is the framework that, for us, leads to the strongest results. It might seem that using an $L^2$-based stability framework requires unrealistic and unverifiable  assumptions on the regularity of exact solutions but it turns out that, for sufficiently regular initial data, e.g., $\rho_0$ in $L^2(\Omega)$, weak solutions have this regularity until blow-up time \cite{Biler1994} and we provide an a posteriori verifiable condition that guarantees that the exact solution is sufficiently regular.


The numerical scheme studied in this manuscript is a  straightforward discontinuous Galerkin (dG) scheme. Much more sophisticated schemes have been developed in the literature, e.g., \cite{Guo2019,Qiu_2021}, offering features such as entropy dissipation and positivity preservation. It is beyond the scope of the current work to extend our error estimator to those schemes but it should be noted that several results from this paper, in particular Theorem \ref{thm:stabilityest}, can be directly used for the  analysis of those schemes,
e.g., the companion paper \cite{GiesselmannKolbe} provides suitable reconstructions and an $H^{-1}$-norm estimate for the residual for a positivity preserving finite volume scheme.

Our analysis is performed in a fully discretised setting.
It is worth noting that the error estimator presented in our work is of optimal order, i.e. the error estimator has the same order of convergence as the norm of the error that it controls.

We investigate a discontinuous Galerkin (dG) scheme discretising Laplace operators by the symmetric interior penalty (SIP) bilinear form and the chemotaxis term by the weighted SIP (wSIP) bilinear form treating density as a diffusion coefficient.
For time discretisation, we use an IMEX (implicit-explicit) scheme.
Our analysis applies to arbitrary polynomial degrees $ k \geq 1$.

The outline of the paper is as follows: In Section \ref{section:background}, we provide some background on weak solutions for the Keller-Segel system with a focus on blow-up phenomena. In Section \ref{section:a_stability_framework}, we establish a stability framework for the Keller-Segel system, which serves as a crucial component in obtaining our a posteriori error estimator. Notably, the stability estimate obtained here can, in certain situations, verify the existence of a sufficiently regular weak solution a posteriori. In Section \ref{sec:dgscheme}, we introduce our discontinuous Galerkin scheme. In Section \ref{sec:reconstruction}, we define reconstructions of the numerical solutions, utilizing the so-called elliptic reconstruction, and also define the residual of the PDEs. In Section \ref{section:aposteriori}, we derive a computable upper bound for the $H^{-1}$-norm of the residual. In Section \ref{sec:aposteriori}, we present the a posteriori error estimator based on the analysis conducted in the previous sections. Finally, in Section \ref{sec:numerical}, we show the results of numerical experiments to verify the optimal scaling behavior of the error estimator.

\section{Background on weak solutions and blow-up} \label{section:background}

We study the parabolic-elliptic Keller-Segel system with the homogeneous Neumann boundary conditions \eqref{eq:KS1}--\eqref{eq:ic}.
%
%
%
%
The existence of a local-in-time weak solution for 
this problem
 is well established \cite[Section 3]{Biler1994}.
Let us recall the definition of a weak solution as given in \cite{Biler1994}:

\begin{definition} \cite{Biler1994} \label{def:weaksol}
    A function $\rho \in L^{\infty}\left(0,T ; L^2(\Omega)\right) \cap L^2\left(0,T ; H^1(\Omega)\right)$ is called a \emph{weak solution} of the problem \eqref{eq:KS1}--\eqref{eq:ic} on $\left( 0, T \right)\times \Omega$, if it satisfies
    \begin{multline*}
        \int_{\Omega} \rho(t, x) \varphi(t, x) d x - \int_0^t \int_{\Omega} \rho \partial_t \varphi - (\nabla\rho - \rho \nabla c) \cdot \nabla \varphi dx ds\\
        =\int_{\Omega} \rho_0(x) \varphi(0, x) d x
   \end{multline*}
    for every $\varphi \in H^1((0, T)\times\Omega )$ and for a.e. $t \in(0, T)$
    and, if,
    in addition, for a.e. $t \in(0, T)$
        \begin{equation*}
        c(t, \cdot) \in H^1(\Omega) \text { with } c(t, x) = (G * \rho(t,\cdot))\left( x \right) \quad \text{for a.e.}\ x \in \Omega
        \end{equation*}
        holds, 
    where $G$ is an appropriate Green's function for $\Omega$.
\end{definition}

\begin{theorem}[Existence of weak solutions] \cite[Theorem 2]{Biler1994} \label{thm:weaksol}
    Assume that $\Omega$ is a bounded domain in $\mathbb{R}^d$ with piecewise smooth boundary.
    \begin{enumerate} [label=(\roman*)]
        \item If $d\in\{2,3\}$, and $0 \leq \rho_0 \in L^2(\Omega)$, then there exists $T=$ $T\left( \left\| \rho_0 \right\|_{L^2(\Omega)} \right)$ such that the problem \eqref{eq:KS1}--\eqref{eq:ic} has a unique weak solution $\rho \in$ $L^{\infty}\left(0, T; L^2(\Omega)\right) \cap L^2\left(0, T ; H^1(\Omega)\right)$. Moreover, $\partial_t\rho \in L^2\left(0, T ; H^{-1}(\Omega)\right)$, $\rho(t, x) \geq 0$ for a.e. $x \in \Omega$ and a.e. $t \geq 0$, and $\int_{\Omega} \rho(t, x) d x=\int_{\Omega} \rho_0(x) d x$.
        \item If $d \geq 2$ and $0 \leq \rho_0 \in L^p(\Omega)$ with $ p>d / 2$, then there exists $T=$ $T\left(p, \left\| \rho_0 \right\|_{L^p(\Omega)}\right)>0$ and a weak solution $\rho$ such that $\rho \in L^{\infty}\left(0, T ; L^p(\Omega)\right)$ and $\rho^{p / 2} \in L^2\left(0, T ; H^1(\Omega)\right)$.
        These solutions are unique when $p>d$, and regular when $p>d / 2$ in the sense that $\rho \in L_{\mathrm{loc}}^{\infty}\left(0, T ; L^{\infty}(\Omega)\right)$.
    \end{enumerate}
\end{theorem}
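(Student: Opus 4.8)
The plan is to construct the solution by a fixed-point (or Faedo--Galerkin) argument built around the decoupling afforded by the elliptic equation. First I would observe that \eqref{eq:KS2} together with the Neumann condition \eqref{eq:bc2} defines a bounded solution operator $\rho \mapsto c = (I-\Delta)^{-1}\rho$, which by elliptic regularity maps $L^2(\Omega) \to H^2(\Omega)$ (and $L^p \to W^{2,p}$) continuously; this is exactly the statement $c = G * \rho$ in Definition \ref{def:weaksol}. Freezing $c$ in the drift term then turns \eqref{eq:KS1} into a linear parabolic equation for $\rho$ with a no-flux boundary condition, for which existence of a unique solution in the energy class $L^2(0,T;H^1)\cap H^1(0,T;H^{-1})$ is standard. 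I would set up the map $\Phi$ sending a given density to the solution of this linearized problem and seek a fixed point of $\Phi$ on a ball in $L^\infty(0,T;L^2)\cap L^2(0,T;H^1)$ for $T$ small.

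The crux is the a priori estimate that makes the ball invariant (and $\Phi$ a contraction). Testing \eqref{eq:KS1} with $\rho$ gives
\[
\frac12 \frac{d}{dt}\|\rho\|_{L^2(\Omega)}^2 + \|\nabla\rho\|_{L^2(\Omega)}^2 = \int_\Omega \rho \, \nabla c \cdot \nabla\rho \, dx,
\]
so everything hinges on absorbing the chemotactic term. Using elliptic regularity to bound $\nabla c$ and the Gagliardo--Nirenberg inequality (for instance $\|\rho\|_{L^4}^2 \lesssim \|\rho\|_{L^2}\|\rho\|_{H^1}$ in $d=2$, with the analogous interpolation in $d=3$), I would estimate the right-hand side by $\varepsilon\|\nabla\rho\|_{L^2}^2 + C(\varepsilon)\, P(\|\rho\|_{L^2})$ for a superlinear polynomial $P$, absorb the gradient term, and arrive at a differential inequality of the form $\frac{d}{dt}\|\rho\|_{L^2}^2 \le C\,P(\|\rho\|_{L^2}^2)$. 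Because $P$ is superlinear the comparison ODE only guarantees boundedness up to a finite time $T=T(\|\rho_0\|_{L^2})$ --- this is precisely the source of the \emph{local}-in-time nature of the result, and mirrors the genuine blow-up of the system. The bound on $\partial_t\rho$ in $L^2(0,T;H^{-1})$ then follows by duality directly from the weak formulation once $\rho \in L^2(0,T;H^1)$ and $\nabla c \in L^\infty(0,T;L^2)$ are known.

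With uniform estimates in hand I would pass to the limit in the approximation scheme using the Aubin--Lions lemma: $\rho$ bounded in $L^2(0,T;H^1)$ with $\partial_t\rho$ bounded in $L^2(0,T;H^{-1})$ yields strong $L^2((0,T)\times\Omega)$ compactness, enough to pass to the limit in the nonlinear product $\rho\nabla c$ (here the smoothing in $\rho \mapsto \nabla c$ is helpful). Nonnegativity I would obtain by testing with the negative part $\rho_- := \min(\rho,0)$ and a Gronwall argument showing $\|\rho_-\|_{L^2}$ stays zero; mass conservation follows by testing the weak formulation with $\varphi \equiv 1$ and using the no-flux conditions \eqref{eq:bc1}--\eqref{eq:bc2}. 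For part (ii) the same scheme is run with the test function $\rho^{p-1}$ in place of $\rho$, which produces $\nabla(\rho^{p/2})$ on the left and hence the claimed regularity $\rho^{p/2}\in L^2(0,T;H^1)$; the Sobolev threshold $p>d/2$ is exactly what is needed for Gagliardo--Nirenberg to close the corresponding estimate, and the $L^\infty$-regularity then comes from a Moser/Alikakos iteration. Uniqueness I would prove by subtracting two solutions, testing the difference equation with the difference itself, and applying Gronwall; controlling the cross term $\nabla\cdot((\rho_1-\rho_2)\nabla c_1)$ requires an $L^\infty$ (or sufficiently strong $L^q$) bound on one solution, available precisely when $p>d$, which explains that restriction.

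The main obstacle throughout is the quasilinear drift term $\nabla\cdot(\rho\nabla c)$: it is genuinely nonlinear (quadratic after substituting $c=(I-\Delta)^{-1}\rho$) and of the same differential order as the diffusion, so the whole argument stands or falls on whether the Gagliardo--Nirenberg and elliptic-regularity bookkeeping lets one absorb it into $\|\nabla\rho\|_{L^2}^2$ with a strictly superlinear but finite remainder. Everything else --- fixed point, compactness, sign, and mass --- is comparatively routine once that estimate is secured.
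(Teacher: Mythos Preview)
Your sketch is a sound outline of the standard proof --- fixed point or Galerkin construction, closure of the $L^2$ (resp.\ $L^p$) energy estimate via Gagliardo--Nirenberg and elliptic regularity for the map $\rho\mapsto c$, Aubin--Lions compactness, and Moser iteration for the $L^\infty$ bound --- and this is indeed essentially how the result is obtained in \cite{Biler1994} (see also \cite{Biler1992,Biler1998}). Note, however, that the present paper does \emph{not} supply its own proof of Theorem~\ref{thm:weaksol}: the theorem is quoted verbatim from \cite[Theorem~2]{Biler1994} as background, and only the blow-up criterion in Remark~\ref{rmk:blow-up} is given an explicit argument here. So there is nothing in the paper to compare your proposal against beyond the bare citation; your outline matches the cited source rather than anything the authors do themselves.
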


\begin{remark} \label{rmk:energy_eq}
    With (i) of Theorem \ref{thm:weaksol}, a routine calculation, e.g., \cite{Biler1998} and references therein, shows that such a weak solution satisfies $\rho \in C\left([0, T] ; L^2(\Omega)\right)$, and the energy balance 
    \begin{equation*}
        \frac{1}{2} \int_{\Omega} \rho^2(t, x) d x+\int_0^t \int_{\Omega}(\nabla \rho - \rho \nabla c) \cdot \nabla \rho dx ds = \frac{1}{2} \int_{\Omega} \rho_0^2(x) d x.
    \end{equation*}
    Moreover, by the representation $c = G * \rho$, we know that $c \in C([0,T]; H^1(\Omega))$.

    For notational convenience, we abuse the differential form throughout this paper:
    \begin{equation} \nonumber
        \frac{1}{2}\frac{d}{dt} \left\| \rho(t, \cdot) \right\|_{L^2(\Omega)}^2 + \left\| \nabla \rho(t, \cdot) \right\|_{L^2(\Omega)}^2 = \int_\Omega \rho(t, \cdot) \nabla c(t, \cdot) \cdot \nabla \rho(t, \cdot)dx .
    \end{equation}
    This is obtained through a formal process that involves computing the $L^2$ inner product of the equation with $\rho$, followed by an integration by parts.

\end{remark}
We make several observations regarding the features of blow-up of solutions, which will play a crucial role in our a posteriori analysis in Section \ref{section:a_stability_framework}.
\begin{remark} [Blow-up of $L^p$-norms] \label{rmk:uniformbound}
    We remark that (ii) of Theorem \eqref{thm:weaksol} means that if a solution blows up at some time $T>0$ (i.e. $\lim_{t \nearrow T} \left\| \rho(t, \cdot) \right\|_{L^\infty(\Omega)} = \infty$), then the norms $\left\| \rho(t, \cdot) \right\|_{L^p(\Omega)}$ blow up for all $p \in(d / 2, \infty]$ at the same moment \cite[p.333]{Biler1994}.
\end{remark}

\begin{lemma} [Blow-up criterion for weak solutions]
    \label{rmk:blow-up}
    Let $\rho_0 \in L^2(\Omega)$ and let $T_{max} \in (0, +\infty]$ be the maximal existence time of the weak solution $\rho$ for \eqref{eq:KS1}--\eqref{eq:ic}, i.e., the supremum of all $T>0$ such that the weak solution $\rho$ exists on $[0, T]$.
    If $T_{max}<\infty$, we have 
    \begin{equation} \label{eq:blow-up}
        \lim_{t \nearrow T_{max}}\left\| \rho(t, \cdot) \right\|_{L^\infty(\Omega)} = \infty.
    \end{equation}
\end{lemma}

The existing literature, e.g., \cite[Theorem 3.2]{Suzuki2005}, provides a proof of the blow-up criterion for smooth solutions to the Keller-Segel system. However, despite the possibility of establishing this property for weak solutions, to the best of our knowledge, it is not available in literature. Since this property is crucial for the proof of Corollary \ref{thm:guarantee_existence}, we provide a rigorous proof of Lemma \ref{rmk:blow-up}.

\begin{prooflemma}{2.5}
Assume that equation \eqref{eq:blow-up} does not hold. We can then extract a sequence $\left\{ t_k  \right\}_{k=1}^{\infty}$, such that $\left\| \rho(t_k, \cdot) \right\|_{L^\infty(\Omega)} < C$ for all $k$, and where $t_k$ approaches $T_{max}$ from below as $k$ tends to infinity. Since $\Omega$ is bounded, we have $\left\| \rho(t_k, \cdot) \right\|_{L^2(\Omega)} \leq C$ for all $k$. Now if we use each $t_k$ as an initial time point, then $(i)$ of Theorem \ref{thm:weaksol} enables us to choose a number $\delta_k = \delta_k\left( \left\| \rho(t_k, \cdot) \right\|_{L^2(\Omega)} \right) > 0$ such that the weak solution $\rho$ exists on the time interval $[t_k, t_k + \delta_k)$.
Moreover, since the sequence $\left\| \rho(t_k, \cdot) \right\|_{L^2(\Omega)}$ is uniformly bounded, the sequence $\left\{ \delta_k \right\}$ does not shrink to zero (cf. \cite[the proof of Theorem 1]{Biler1992}). Thus we have $\delta : = \inf_{k}\delta_k > 0$. Now, we can choose $k_0 \in \mathbb{N}$ such that $T_{max} - \frac{1}{2}\delta < t_{k_0}$. By employing the continuation argument, we find that the weak solution $\rho$ exists until $t_{k_0} + \delta > T_{max}$, a contradiction to the definition of $T_{max}$. Thus, \eqref{eq:blow-up} holds true.
\end{prooflemma}

The stability framework that will be presented in the next section requires that $c$ can  be controlled by sufficiently  weak norms of  $\rho$, i.e. elliptic regularity for \eqref{eq:KS2}, \eqref{eq:bc2}. Since \eqref{eq:KS2} has constant coefficients, this depends on properties of $\Omega$. A sufficient condition is that $\Omega$ is convex.
\begin{assumption} \label{assumption:elliptic}
    Throughout this paper, we assume $d \in \{2, 3\}$ and that $\Omega$ is 
    such that \eqref{eq:KS2}, \eqref{eq:bc2} enjoys elliptic regularity, i.e. there exists a positive constant $C_{ell}>0$, depending only on $\Omega$, such that $\|  c(t, \cdot)\|_{H^2} \leq C_{ell} \| \rho \left( t, \cdot \right)\|_{L^2}$ for all $t$.
\end{assumption}

\section{Stability framework} \label{section:a_stability_framework}
Let  $\left( \rho, c \right)$ be a weak solution to the problem \eqref{eq:KS1}--\eqref{eq:ic} and let $\left( \bar{\rho}, \bar{c} \right)$ with $\bar{\rho} \in C\left( [0,T]; H^1(\Omega) \right)$, $\partial_t \bar{\rho} \in C\left( (0,T]; H^{-1}(\Omega) \right)$, $\bar{c}\in C\left( [0,T]; H^1(\Omega) \right)$ be a \emph{strong} solution to the following perturbed problem:
\begin{eqnarray}
    \partial_t \brho + \nabla \cdot (\brho \nabla \bc - \nabla \brho) &=R_\rho   \quad \text{ in } (0,T) \times \Omega\label{eq:KS-p1}\\
    -  \Delta \bc + \bc &= \brho     \quad \text{ in } (0,T) \times \Omega\label{eq:KS-p2}\\
   \nabla \rho \cdot \mathrm{n} &=0 \quad \text{ on } (0,T) \times \partial \Omega\\
   \nabla c \cdot \mathrm{n} &=0 \quad \text{ on } (0,T) \times \partial \Omega
   \label{eq:bc-p2}
\end{eqnarray}
with some given function $R_\rho \in L^2(0,T; H^{-1}( \Omega))$.
In this section, we provide an estimate for the difference $(\rho - \brho, c - \bc)$ in terms of the (possible) difference of initial data and  of $R_\rho$.
The situation we have in mind is that $(\brho, \bc)$ is obtained as a reconstruction of a numerical solution, see Section \ref{sec:reconstruction}. Nevertheless, we should stress that our stability framework does not depend on how $(\brho, \bc)$ is obtained.
 
Taking into account Remark \ref{rmk:energy_eq}, we will subsequently provide a formal argument for the derivation of the stability estimate, but it can be made rigorous by interpreting it  in the context of appropriate integral formulation. For the sake of brevity, we choose not to explicitly include $\Omega$ in the norm symbols. Furthermore, the time dependency of functions will also not be explicitly denoted. 

Subtracting \eqref{eq:KS-p1} from \eqref{eq:KS1} and testing with $\rho - \bar{\rho}$ gives
 \begin{multline*}
 \frac{d}{dt} \left[ \int_{\Omega} \frac{1}{2} (\rho - \brho)^2  dx \right] +\int_{\Omega}   |\nabla (\rho - \brho)|^2 dx\\
 = \int_\Omega \nabla (\rho - \brho) \brho \nabla (c - \bc) + \nabla (\rho - \brho) (\rho - \brho) \nabla \bc - R_\rho(\rho - \brho)
 + \nabla (\rho - \brho) (\rho -\brho) \nabla (c - \bc) dx,
 \end{multline*}
where we used the homogeneous boundary conditions. Using Cauchy-Schwarz's inequality we obtain
\begin{multline*}
        \frac{d}{dt} \left[\frac 12 \|  \rho - \brho\|_{L^2}^2  \right]
        +   |\rho - \brho|_{H^1}^2  \\
        \leq  |\rho - \brho|_{H^1} \|\brho\|_{L^3}  |c - \bc|_{W^{1,6}} + |\rho - \brho|_{H^1} \|\rho - \brho\|_{L^2} \|\nabla \bc\|_{L^\infty}\\
         \quad + \|R_\rho\|_{H^{-1}} \|\rho - \brho\|_{H^1}
        + |\rho - \brho|_{H^1} \|\rho -\brho\|_{L^3} |c - \bc|_{W^{1,6}}.
\end{multline*}
Since the number of space dimensions satisfies $d \leq 3$, we have $\left| c - \bar{c} \right|_{W^{1,6}} \leq C_S \left\| \nabla c - \nabla \bar{c} \right\|_{H^1}$ and 
$\left\| \rho - \bar{\rho} \right\|_{L^3} \leq C_S' \left\| \rho - \bar{\rho} \right\|_{H^1}$,
where $C_S$ and $C_S'$ are the constants of the embedding $H^1 \rightarrow L^6$ and $H^1 \rightarrow L^3$, respectively. Moreover, by elliptic regularity, we have $\left\| c - \bar{c} \right\|_{H^2} \leq C_{ell} \left\| \rho - \bar{\rho} \right\|_{L^2}$. 
\begin{multline*}
    \frac{d}{dt} \left[\frac 12 \|  \rho - \brho\|_{L^2}^2  \right]  +  |\rho - \brho|_{H^1}^2 \nonumber \\
    \leq C_SC_{ell} |\rho - \brho|_{H^1} \|\brho\|_{L^3} \left\| \rho - \brho \right\|_{L^2} + |\rho - \brho|_{H^1} \|\rho - \brho\|_{L^2} \|\nabla \bc\|_{L^\infty}\nonumber\\
      + \|R_\rho\|_{H^{-1}} \|\rho - \brho\|_{H^1}
    +C_S' C_S C_{ell} |\rho - \brho|_{H^1} \|\rho -\brho\|_{H^1} \left\| \rho - \bar{\rho} \right\|_{L^2}.
\end{multline*}
Using Young's inequality and gathering terms on the right hand side, we obtain
\begin{multline}\label{3}
    \frac{d}{dt} \left[ \left\| \rho - \bar{\rho} \right\|_{L^2}^2 \right] + \left| \rho - \bar{\rho} \right|_{H^1}^2 \leq \left( 3 C_S^2 C_{ell}^2 \left\| \bar{\rho} \right\|_{L^3}^2 + 3 \left\| \nabla \bar{c} \right\|_{L^\infty}^2 + \frac{1}{3} \right) \left\| \rho - \bar{\rho} \right\|_{L^2}^2 \\ + 3 \left\| R_\rho \right\|_{H^{-1}}^2
    + 2C_S'C_S C_{ell} |\rho - \brho|_{H^1} \| \rho - \brho\|_{H^1} \|\rho -\brho\|_{L^2}.
\end{multline}
 Let us set
 \begin{equation} \label{eq:y123a}
 \begin{split}
  y_1 (t) &:= \|  \rho (t,\cdot) - \brho (t,\cdot)\|_{L^2}^2,   \\
  y_2(t) &:= |  \rho (t,\cdot) - \brho (t,\cdot)|_{H^1}^2, \\
  y_3 (t)&:= 2C_S'C_SC_{ell} |\rho(t,\cdot) - \brho(t,\cdot)|_{H^1} \| \rho(t,\cdot) - \brho(t,\cdot)\|_{H^1} \|\rho(t,\cdot) -\brho(t,\cdot)\|_{L^2},\\
  a(t) &:= 3 C_S^2C_{ell}^2 \|\brho(t,\cdot)\|_{L^3}^2  + 3 \|\nabla \bc(t,\cdot)\|_{L^\infty}^2 + \frac{1}{3}.
 \end{split}
\end{equation}
Then, we can integrate \eqref{3} in time from $0$ to $T'$ to obtain
\begin{equation}\label{near:gron}
 y_1(T') +  \int_0^{T'} y_2(t) dt 
 \leq
 y_1(0)  + \int_0^{T'}\| R_\rho\|_{H^{-1}}^2 dt + \int_0^{T'} a(t) y_1(t) dt + \int_0^{T'} y_3(t)  dt
\end{equation}
Since $ \left\| \rho - \bar{\rho} \right\|_{H^1}^2 = \left\| \rho - \bar{\rho} \right\|_{L^2}^2 + \left| \rho - \bar{\rho} \right|_{H^1}^2 $,
we have
\begin{multline}  \nonumber
    y_3(t) \leq
    2C_S'C_SC_{ell} 
    \|\rho - \brho\|_{H^1}^2 \|\rho -\brho\|_{L^2} 
    \\
    \leq  2 C_S'C_SC_{ell}   \sqrt{y_1(t)} (y_1(t) + y_2(t) ).
\end{multline}
We have used Young's inequality in the last step. This leads us to:
\begin{equation}\label{est:y3}
  \int_0^{T'} y_3(t) dt \leq 2 C_S'C_SC_{ell} \sup_t \sqrt{y_1(t)}  \int_0^{T'} y_1(t) + y_2(t) dt .
\end{equation}

Equations \eqref{near:gron} and \eqref{est:y3} show that our analysis fits into the framework of the generalized Grönwall lemma (see Lemma \ref{prp:GeneralizedGronwall})
with $y_1,y_2,y_3,$ and $a$ as above, and $B:= 2 C_S'C_SC_{ell}$, $\beta := \frac 12$, and
\begin{equation}\label{eq:AE}
    A:= y_1(0)  + \int_0^{T} \| R_\rho\|_{H^{-1}}^2  dt,
    \quad
    \text{and}
    \quad E := \exp\left(  \int_0^{T}  a(t) dt\right).
\end{equation}

\begin{lemma} \cite[Proposition 6.2]{Bartels} \label{prp:GeneralizedGronwall}
    Suppose that nonnegative functions $y_1 \in C([0, T]), y_2, y_3 \in L^1([0, T]), a \in L^{\infty}([0, T])$, and a real number $A \geq 0$ satisfy
    \begin{equation*}
    y_1\left(T^{\prime}\right)+\int_0^{T^{\prime}} y_2(t) \mathrm{d} t \leq A+\int_0^{T^{\prime}} a(t) y_1(t) \mathrm{d} t+\int_0^{T^{\prime}} y_3(t) \mathrm{d} t
    \end{equation*}
    for all $T^{\prime} \in[0, T]$ and that, in addition, for $B \geq 0, \beta>0$, and every $T^{\prime} \in[0, T]$
    \begin{equation*}
    \int_0^{T^{\prime}} y_3(t) \mathrm{d} t \leq B\left(\sup _{t \in\left[0, T^{\prime}\right]} y_1^\beta(t)\right) \int_0^{T^{\prime}}\left(y_1(t)+y_2(t)\right) \mathrm{d} t .
    \end{equation*}
    Let $E:=\exp \left(\int_0^T a(t) \mathrm{d} t\right)$. Then, provided $8 A E \leq(8 B(1+T) E)^{-1 / \beta}$ is satisfied the following inequality holds:
    \begin{equation*}
        \sup _{t \in[0, T]} y_1(t)+\int_0^T y_2(t) \mathrm{d} t \leq 8 A \exp \left(\int_0^T a(s) \mathrm{d} s\right) .
    \end{equation*}
\end{lemma}
Thus, we have the following conditional stability result:
\begin{theorem} [Conditional stability estimate]
    \label{thm:stabilityest} 
    For a fixed $T > 0$, let $(\rho, c)$ be a weak solution to \eqref{eq:KS1}--\eqref{eq:ic} and let $(\bar{\rho}, \bar{c})$ be a strong solution to \eqref{eq:KS-p1}--\eqref{eq:bc-p2}. Define $y_1, y_2, y_3,$ and $a$ as in \eqref{eq:y123a}. Let $B = 2 C_S' C_S C_{\text{ell}}$, $\beta = \frac{1}{2}$, and $A, E$ as in \eqref{eq:AE}. Here, $C_S$ and $C_S'$ are constants from the embeddings $H^1 \rightarrow L^6$ and $H^1 \rightarrow L^3$, respectively, while $C_{\text{ell}}$ is the constant of elliptic regularity from \eqref{assumption:elliptic}.
    Then, provided the condition
    \begin{equation} \label{eq:estcondition}
        8 A E ( 8B (1 +T) E)^{2} \leq 1
    \end{equation}
    is satisfied, the difference $\rho - \brho$ is controlled as follows:
    \begin{multline}\label{mainestimate}
    \sup_{t \in [0,T]}   \|  \rho (t,\cdot) - \brho (t,\cdot)\|_{L^2(\Omega)}^2  +  \int_0^T |  \rho (t,\cdot) - \brho (t,\cdot)|_{H^1(\Omega)}^2 dt
    \\
    \leq 8 \left(    \|  \rho (0,\cdot) - \brho (0,\cdot)\|_{L^2(\Omega)}^2 + \int_0^{T} \| R_\rho\|_{H^{-1}(\Omega)}^2  dt   \right) \\
    \times \exp \left( \int_0^T 3 C_S^2C_{ell}^2 \|\brho(t,\cdot)\|_{L^3(\Omega)}^2  + 3 \|\nabla \bc(t,\cdot)\|_{L^\infty(\Omega)}^2 + \frac 1 3 dt\right).
\end{multline}
\end{theorem}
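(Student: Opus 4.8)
The plan is to invoke the generalized Gronwall lemma, Proposition \ref{prp:GeneralizedGronwall}, since the derivation carried out above has already massaged the $L^2$-energy identity for $\rho - \brho$ into precisely its two structural hypotheses. Concretely, inequality \eqref{near:gron} is the first hypothesis of the proposition, with $A$ as in \eqref{eq:AE} absorbing the initial data and the residual contribution $\int_0^T \|R_\rho\|_{H^{-1}}^2\,dt$ and with $a(t)$ as in \eqref{eq:y123a}, while \eqref{est:y3} is the second hypothesis with $B = 2C_S'C_SC_{ell}$ and exponent $\beta = \tfrac12$. So the only work genuinely left is to (i) check the admissibility and regularity requirements on the auxiliary functions, and (ii) confirm that the smallness condition \eqref{eq:estcondition} is exactly the hypothesis of the proposition.

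For step (i), I would verify that $y_1 \in C([0,T])$, that $y_2, y_3 \in L^1([0,T])$ and that $a \in L^\infty([0,T])$. Continuity of $y_1$ follows from $\rho \in C([0,T];L^2)$ (Remark \ref{rmk:energy_eq}) together with $\brho \in C([0,T];H^1)$; integrability of $y_2$ follows from $\rho \in L^2(0,T;H^1)$ and $\brho \in C([0,T];H^1)$; and $y_3 \in L^1$ then follows from the bound already recorded in \eqref{est:y3}. For $a \in L^\infty$ one uses the embedding $H^1 \hookrightarrow L^3$, valid for $d \le 3$, to bound $\|\brho\|_{L^3}$ via $\brho \in C([0,T];H^1)$, and one takes $\|\nabla\bc\|_{L^\infty}$ to be finite uniformly in $t$ — a point I would flag, since for a mere $H^2$ field this is the borderline embedding in $d=3$; in the intended application $\bc$ is the (piecewise polynomial) elliptic reconstruction, for which this quantity is finite and a posteriori computable.

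For step (ii), I would simply unwind the algebra: with $\beta = \tfrac12$ the proposition's hypothesis $8AE \le (8B(1+T)E)^{-1/\beta}$ reads $8AE \le (8B(1+T)E)^{-2}$, and multiplying through by $(8B(1+T)E)^2$ turns this into $8AE\,(8B(1+T)E)^2 \le 1$, which is exactly \eqref{eq:estcondition}. Granting this, Proposition \ref{prp:GeneralizedGronwall} yields $\sup_{t\in[0,T]} y_1(t) + \int_0^T y_2(t)\,dt \le 8A\,E$; substituting the definitions \eqref{eq:y123a} and \eqref{eq:AE} of $y_1,y_2,A,a$ then reproduces \eqref{mainestimate} verbatim.

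The main obstacle lies not in this final bookkeeping but in the formal derivation preceding the statement, which must be made rigorous: the test-function computation leading to \eqref{3} is only formal because $\rho$ is merely a weak solution, so the $L^2$ pairing with $\rho - \brho$ has to be justified in the appropriate integral (duality) formulation, using $\partial_t\rho \in L^2(0,T;H^{-1})$ and the energy balance of Remark \ref{rmk:energy_eq}. The one analytic subtlety I would isolate is the handling of the cubic cross term $\nabla(\rho-\brho)\,(\rho-\brho)\,\nabla(c-\bc)$, whose $L^3$–$H^1$ bound — via $H^1 \hookrightarrow L^3$ and elliptic regularity for $c - \bc$ — is precisely what produces the superlinear quantity $y_3$ and hence forces the estimate to be \emph{conditional}; everything else is standard Cauchy–Schwarz and Young bookkeeping that I would not belabour.
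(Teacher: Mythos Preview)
Your proposal is correct and follows essentially the same approach as the paper: the proof in the paper is really the derivation preceding the theorem statement, culminating in the observation that \eqref{near:gron} and \eqref{est:y3} are precisely the two hypotheses of Proposition~\ref{prp:GeneralizedGronwall}, after which the theorem is stated as an immediate consequence. You are in fact slightly more careful than the paper, which does not explicitly verify the regularity requirements $y_1 \in C([0,T])$, $y_2,y_3 \in L^1$, $a \in L^\infty$, and your flag about $\|\nabla\bc\|_{L^\infty}$ being a borderline embedding in $d=3$ is a fair observation that the paper sidesteps.
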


\begin{remark}[Stability framework] 
    The above stability framework is independent of the way the approximate solution $(\bar \rho, \bar c)$ is obtained. 
    We apply this framework to a posteriori error analysis of a discontinuous Galerkin scheme in Section \ref{sec:dgscheme}. It should be noted that, in principle, this framework is applicable to approximations obtained from any numerical scheme, e.g., some finite volume scheme \cite{GiesselmannKolbe}. The major step that needs to be done is to define a suitable reconstruction, i.e. one that is sufficiently regular. Moreover, it is desirable that the corresponding residual scales optimally.
\end{remark}

One advantage of using the estimate \eqref{mainestimate} is that
the assumption of the existence of a weak solution $(\rho, c)$  until time $T$ can be verified \emph{a posteriori}.
Indeed, the condition \eqref{eq:estcondition} is verifiable a posteriori, and if it holds, the error estimate itself rules out blow-up before time $T$, thus proving a posteriori that a sufficiently regular weak solution exists. More precisely:

\begin{corollary} [A posteriori verifiable regularity]
    \label{thm:guarantee_existence}
    Let $\rho_0 \in L^2(\Omega)$ and 
    let $T_{max} > 0$ be the maximal existence time of the weak solution $\rho$ to \eqref{eq:KS1}--\eqref{eq:ic}.
    Suppose that there exists an approximate strong solution $\brho$ of \eqref{eq:KS-p1}--\eqref{eq:bc-p2} such that at some time $T>0$
   the condition \eqref{eq:estcondition} is satisfied, $\|\bar \rho\|_{L^\infty(0,T; L^2(\Omega))}$ is finite and the right hand side of \eqref{mainestimate} is finite. Then $T_{max} > T$, i.e., the weak solution exists at least until time  $T$. 
\end{corollary}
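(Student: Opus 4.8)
The plan is to argue by contradiction using the blow-up criterion of Remark \ref{rmk:blow-up} together with the conditional stability estimate of Theorem \ref{thm:stabilityest}. Suppose, contrary to the claim, that $T_{max} \le T$; in particular $T_{max} < \infty$, so Remark \ref{rmk:blow-up} applies and gives $\lim_{t \to T_{max}} \|\rho(t,\cdot)\|_{L^\infty(\Omega)} = \infty$. Since $d \in \{2,3\}$ we have $d/2 < 2$, hence $2 \in (d/2, \infty]$, and Remark \ref{rmk:uniformbound} upgrades this to $\lim_{t \to T_{max}} \|\rho(t,\cdot)\|_{L^2(\Omega)} = \infty$. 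The goal is then to contradict this by producing a uniform $L^2$ bound on $\rho$ up to $T_{max}$.

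To obtain such a bound I cannot apply Theorem \ref{thm:stabilityest} directly on $[0,T]$, since the weak solution is only known to exist on $[0,T_{max})$. Instead I would apply it on each truncated interval $[0,T']$ with $T' < T_{max} \le T$, on which $(\rho,c)$ is a genuine weak solution. The key observation is that the smallness condition \eqref{eq:estcondition} is inherited by every such subinterval: writing $A(T')$ and $E(T')$ for the quantities in \eqref{eq:AE} with $T$ replaced by $T'$, both are nondecreasing in $T'$ because the integrands $\|R_\rho\|_{H^{-1}}^2$ and $a(t) \ge \tfrac13 > 0$ are nonnegative. Hence $8 A(T') E(T')(8B(1+T')E(T'))^2 \le 8 A E (8B(1+T)E)^2 \le 1$, so Theorem \ref{thm:stabilityest} is applicable on $[0,T']$ and yields
$$\sup_{t \in [0,T']} \|\rho(t,\cdot) - \brho(t,\cdot)\|_{L^2(\Omega)}^2 \le 8 A(T') E(T') \le 8 A E,$$
a bound independent of $T'$.

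Finally I would combine this with the assumed finiteness of $\|\brho\|_{L^\infty(0,T;L^2(\Omega))}$ via the triangle inequality,
$$\sup_{t \in [0,T']} \|\rho(t,\cdot)\|_{L^2(\Omega)} \le \sqrt{8AE} + \|\brho\|_{L^\infty(0,T;L^2(\Omega))},$$
whose right-hand side is finite and independent of $T'$. Letting $T' \uparrow T_{max}$ shows that $\|\rho(t,\cdot)\|_{L^2(\Omega)}$ stays bounded as $t \to T_{max}$, contradicting the blow-up of the $L^2$ norm established above. Therefore $T_{max} > T$. I expect the main obstacle to be the bookkeeping in the second step: one must verify carefully that every ingredient of \eqref{eq:estcondition} and of the right-hand side of \eqref{mainestimate} behaves monotonically under shrinking of the time horizon, so that finiteness and smallness at time $T$ transfer to all $T' < T_{max}$, and that the constants $B$, $C_S$, $C_S'$, $C_{ell}$ are genuinely independent of the interval. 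A secondary point to state cleanly is the passage from the $L^\infty$ blow-up criterion to $L^2$ blow-up, which relies on Remark \ref{rmk:uniformbound} and the restriction $d \le 3$.
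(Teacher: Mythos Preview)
Your proposal is correct and follows essentially the same approach as the paper's proof: both argue by contradiction, note that the smallness condition \eqref{eq:estcondition} is inherited on earlier time intervals by monotonicity, use Theorem \ref{thm:stabilityest} together with the finiteness of $\|\bar\rho\|_{L^\infty(0,T;L^2)}$ to obtain a uniform $L^2$ bound on $\rho$, and contradict the $L^2$ blow-up forced by Remarks \ref{rmk:uniformbound} and \ref{rmk:blow-up} with $d\le 3$. If anything, your version is more careful than the paper's, since you explicitly work on subintervals $[0,T']$ with $T'<T_{max}$ where the weak solution is known to exist, and you handle the boundary case $T_{max}=T$ that the paper's phrasing ``Suppose $T_{max}<T$'' glosses over.
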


\begin{proof}
    If \eqref{eq:estcondition}  holds for some time $T>0$ then it also holds for all earlier times, since the left hand side of \eqref{eq:estcondition} is increasing in time.  Suppose $T_{max} \leq T$. Then,  \eqref{mainestimate} evaluated at time $T$ provides a uniform upper bound for $\|\rho(t, \cdot)\|_{L^2(\Omega)}$ for almost all  $t \in [0,T]$, since $\left\| \bar{\rho}(t, \cdot) \right\|_{L^2(\Omega)}$ is finite owing to the assumption.
    In contrast, Remark \ref{rmk:uniformbound} and Lemma \ref{rmk:blow-up} imply that the $L^2$-norm of $\rho$ must blow up  as $t \nearrow T_{max}$, since $d \leq 3$. This is a contradiction.
\end{proof}

\begin{remark} [A posteriori verifiability] \label{rmk:aposteri_verifiability}
    Given a numerical solution at time $T$, if its reconstruction (see Section \ref{sec:reconstruction}) has a finite $L^\infty(L^2)$-norm and satisfies the condition \eqref{eq:estcondition}, which depends on the data $\rho_0$, then the exact solution cannot have blown up before time $T$.
    However, the condition \eqref{eq:estcondition} contains the exponential $E$, making it quite restrictive. 
    $E$ is related to error propagation. Due to the nonlinear and somewhat unstable dynamics of the Keller-Segel model and the fact that it  $E$ is potentially large. In particular, there is no way to construct a reasonable numerical scheme that ensures that the norms contained in
    $E$ are small. Indeed, if an approximating solution $\bar{\rho}$ is close to the exact solution $\rho$, then $E$ basically reflects properties of the exact solution. This situation is analogous to the situation encountered in \cite{2023Giesselmann}, \cite{Cangiani_2016}, and \cite{2011Bartels}.
    However, using a high order method can mitigate this to a certain extent. 
    Roughly speaking, $A$ goes to zero with powers of $\Delta x$ and $\Delta t$ that depend on the order of the method and the smoothness of the solution.
\end{remark}

We now apply the stability framework of Theorem \ref{thm:stabilityest} to a specific scheme and a specific reconstruction. In the subsequent section, we provide a Discontinuous Galerkin scheme, which is attractive since it allows for high-order approximations and hp-adaptivity.

\section{Discontinuous Galerkin scheme}
\label{sec:dgscheme}
We employ a discontinuous Galerkin (dG) finite element method for spatial discretisation.
To ensure our paper is self-contained, we start this section by briefly recalling some basic notations. Details can be found in, e.g., \cite{Di_Pietro_2012}.
\begin{definition}[Finite element space] \label{definition:finite-element-space}
    Let $\Omega \subset R^d$ with $d=2,3$ be decomposed into a set $\mathcal{T}$, called a \emph{mesh}, of disjoint polyhedra $\{T \}$. Each $T \in \mathcal{T}$ is called a \emph{mesh element}.
    Let $h_T$ denote the \emph{diameter} of $T$ and the \emph{mesh size} is defined as $h:=\max_{T \in \mathcal{T}}h_T$.
    We use the notation $\mathcal{T}_h$ for $\mathcal{T}$ with mesh size $h$.

    We define $\mathcal{F}_h^i$ as the set of common \emph{interfaces} of cells in the mesh $\mathcal{T}_h$, and $\mathcal{F}_h^b$ as the set of \emph{boundary faces} on the boundary $\partial \Omega$. We set
    \begin{equation*}
    \mathcal{F}_h := \mathcal{F}_h^i \cup \mathcal{F}_h^b
    \end{equation*}
    as the set of all faces in the mesh. For all $F \in \mathcal{F}_h$, 
    we set $h_F$ to be equal to the diameter of the face $F$. Furthermore, for any $T \in \mathcal{T}_h$, we set
    \begin{equation} \nonumber
        \mathcal{F}_T := \left\{F \in \mathcal{F}_h \mid F \subset \partial T\right\},
    \end{equation}
    which collects the mesh faces comprising the boundary of $T$.

    We say that a mesh sequence $\left\{ \mathcal{T}_h \right\}_{h>0}$ is \emph{admissible} if it is shape- and contact-regular and if it has optimal polynomial approximation properties. Definitions of these notions can be found in \cite[Section 1.4]{Di_Pietro_2012}.
    Throughout this paper, we assume that the mesh sequence $\left\{ \mathcal{T}_h \right\}_{h>0}$ is \emph{admissible}.

    Let $\mathbb{P}_d^k$ denote the space of polynomials in space dimension $d$ of degree less than or equal to $k$. We define the \emph{discontinuous finite element space} as
    \begin{equation} \nonumber
        V_h :=
        \{v : \Omega \to \mathbb{R} \, : \,
        v | _{T} \in \mathbb{P}_d^k{(T)} \ \forall\, T \in \mathcal{T}_h\}
    \end{equation}
    with polynomial degree $k \geq 1$.
\end{definition}

\begin{remark}
    For our numerical analysis and implementation, we assume that the domain $\Omega$ is a polyhedron in $\mathbb{R}^d$ to make the presentation more concise. Thus the decomposition $\Omega = \cup \mathcal{T}_h$ in Definition \ref{definition:finite-element-space} makes sense. For domains with curved boundaries, see \cite[Assumption 1.7]{Di_Pietro_2012}.
\end{remark}

\begin{definition}[Broken Sobolev spaces and broken gradients]\label{definition:broken-sobolev-space}
    We introduce the \emph{broken Sobolev space}
    \begin{equation} \nonumber
        \begin{aligned}
            W^{m, p}\left(\mathcal{T}_h\right)&:=\left\{v \in L^p(\Omega) \, : \, v|_T \in W^{m, p}(T) \ \forall T \in \mathcal{T}_h \right\}
        \end{aligned}
    \end{equation}
    where $m \in \mathbb{N}\cup\{0\}$ and $p \geq 1$.
    For $p=2$, we set $H^m(\mathcal{T}_h): = W^{m, 2}\left(\mathcal{T}_h\right)$.
    The \emph{broken gradient} $\nabla_h: W^{1, p}\left(\mathcal{T}_h\right) \rightarrow$ $\left[L^p(\Omega)\right]^d$ is defined by
    \begin{equation*}
   \left.\quad\left(\nabla_h v\right)\right|_T:=\nabla\left(\left.v\right|_T\right) \quad \forall v \in W^{1, p}\left(\mathcal{T}_h\right), \  \forall T \in \mathcal{T}_h.
    \end{equation*}
\end{definition}


\begin{definition} 
    For $v_h \in V_h$, we define the operators $v_h^+$ and $v_h^-$ on $\mathcal{F}_h^i$ as follows:
    For any $F \in \mathcal{F}_h^i$ and a unit normal vector $\mathrm{n}_F$ on $F$, we set
    \begin{equation} \nonumber
        \begin{aligned}
            v_h^+(x)&:= \lim_{\delta \to 0^+} v_h(x + \delta \mathrm{n}_F)\\
            v_h^-(x)&:= \lim_{\delta \to 0^+} v_h(x - \delta \mathrm{n}_F)
        \end{aligned}
    \end{equation}
    for a.e. $x \in F$.
\end{definition}

\begin{definition} [Averages and jump]
    Let $\omega_+, \omega_- \in L^{\infty}(\mathcal{F}_h^i)$ be a function satisfying $\omega_+ + \omega_- = 1$ and bounded from below by a positive real number. For $v_h \in V_h$, we define its \emph{weighted average} and \emph{jump} operators on $\mathcal{F}_h^i$ as 
    \begin{equation}
        \begin{aligned}
            \dc{v_h}_\omega&:=\omega_+v_h^+ + \omega_-v_h^-\\
            \llbracket v_h \rrbracket&:=v_h^+ - v_h^-
        \end{aligned}
    \end{equation}
    In particular, if $\omega_+ = \omega_- = \frac{1}{2}$ then
    we denote $\dc{v_h}_\omega$ as $\dc{v_h}$ and call it the \emph{average} operator, i.e.,
    \begin{equation}
        \begin{aligned}
            \dc{v_h}&:=\frac{1}{2}\left(v_h^+ + v_h^-\right).
        \end{aligned}
    \end{equation}
    Moreover, we introduce the \emph{skew-weighted average} operator of $v_h$ on $\mathcal{F}_h^i$ as
    \begin{equation}
        \begin{aligned}
            \dc{v_h}_{\bar{\omega}}&:=\omega_-v_h^+ + \omega_+v_h^-.
        \end{aligned}
    \end{equation}
    When $v_h$ is vector-valued, these operators act componentwise.
\end{definition}
\begin{remark} [Sign of jump in normal directions]
    Note that the sign of $\llbracket v_h \rrbracket \mathrm{n}_F$ is definite on each face $F \in \mathcal{F}_h^i$, since it is independent of the choice of the normal vector $\mathrm{n}_F$.
\end{remark}

Using this notation, we define the \emph{symmetric interior penalty} (SIP) bilinear form:
\begin{equation}\label{eq:sip}
    \begin{aligned}
        & a^{sip}_h: V_h \times V_h \rightarrow  \mathbb{R}\\
        & a^{sip}_h(u_h,\phi_h) 
         : = 
          \sum_{T \in \mathcal{T}_h}\int_{T}
          \nabla u_h \cdot \nabla \phi_h\\
         & 
          -
          \sum_{F \in \mathcal{F}_h^i}\int_{F} \Big( \jump{u_h}\dc{\nabla \phi_h}\cdot \mathrm{n}_F  + \jump{\phi_h}\dc{\nabla u_h}\cdot \mathrm{n}_F
          -
          \frac{\eta}{h_F} \jump{u_h}\jump{\phi_h}\Big),
    \end{aligned}
\end{equation}
where $\eta>0$ is a parameter which is chosen sufficiently large that $a^{sip}_h$ is semi-positive definite.

We discretise the chemotaxis term using the \emph{weighted symmetric interior penalty} (wSIP) form, which is bilinear with respect to the second and third arguments:
\begin{equation} \label{eq:wsip}
    \begin{aligned}
        & a^{wsip}_h: V_h^+ \times V_h \times V_h \rightarrow  \mathbb{R} \\
        & a^{wsip}_h(v_h;u_h, \psi_h) = \sum_{T \in \mathcal{T}_h}\int_T \left( v_h \nabla_h u_h \right)\cdot\nabla\psi_h \\
        & - \sum_{F \in \mathcal{F}_h^i} \int_F \left(  \llbracket u_h \rrbracket \dc{v_h \nabla_h \psi_h}_{\omega} \cdot \mathrm{n}_F  + \llbracket \psi_h \rrbracket\dc{v_h \nabla_h u_h}_{\omega}\cdot \mathrm{n}_F - \sigma \frac{\gamma_{v_h}}{h_F} \llbracket u_h \rrbracket \llbracket \psi_h \rrbracket \right),
    \end{aligned}
\end{equation}
where the weights $\omega_+$ $\omega_-$ are chosen as
\begin{equation*}
    \omega_+ = \frac{v^-_h}{v^+_h + v^-_h}, \ \omega_- = \frac{v^+_h}{v^+_h + v^-_h},
\end{equation*} $\sigma>0$ is a constant sufficiently large so that $a_h^{wsip}$ is semi-positive definite, and the diffusion-dependent penalty parameter $\gamma_{v_h}$ is set to
\begin{equation*}
    \gamma_{v_h} := \frac{2v_h^+v_h^-}{v_h^+ + v_h^-}.
\end{equation*}
Here $V_h^+ := \{v_h \in V_h \, : \exists c > 0 \text{ s.t. } \, v_h(x) \geq c > 0 \text{ for all } x \in \Omega \}$.


\begin{remark}[Weakly enforced boundary conditions]
    Neither the symmetry nor the penalty terms of equations \eqref{eq:sip} and \eqref{eq:wsip} contain contributions from boundary faces, since we consider the homogeneous Neumann boundary condition. See \cite[Section 4.2.2]{Di_Pietro_2012} for details.
\end{remark}

Let $\pi_h$ be the $L^2$-orthogonal projection onto $V_h$. That is, it satisfies $(\pi_h v, \phi_h)_{L^2(\Omega)} = (v, \phi_h)_{L^2(\Omega)}$ for all $v \in L^2(\Omega)$ and $\phi_h \in V_h$. 
Define $\rho_h^0:=\pi_h \rho_0$.
Our numerical scheme for the Keller-Segel system \eqref{eq:KS1}--\eqref{eq:ic} is then formulated as follows:

Find $\rho_h^{n+1}$ and $c_h^{n+1}$ by solving the following linear system for each time step $n =0, \dots, N-1$:
\begin{equation} \label{eq:fully_discrete}
    \begin{aligned}
        \left( \frac{\rho_h^{n+1} - \rho_h^{n}}{\tau_n}, \phi_h \right)_{L^2(\Omega)} + a_h^{sip}(\rho_h^{n+1}, \phi_h) - a_h^{wsip}(\rho_h^n; c_h^{n+1}, \phi_h)&=0 \quad \forall \quad \phi_h \in V_h\\
        a_h^{sip}(c_h^{n+1}, \psi_h) + \left( c_h^{n+1}, \psi_h \right)_{L^2(\Omega)} - \left( \rho_h^{n+1}, \psi_h \right)_{L^2(\Omega)} &=0 \quad \forall \quad \psi_h \in V_h
    \end{aligned}
\end{equation}
where $0 = t_0 < t_1 < \cdots < t_N = T$ and $\tau_n:=t_{n+1} - t_n$.

\section{Space-time reconstruction} \label{sec:reconstruction}
In this section, we define the space-time reconstruction of the numerical solution $\left\{ (\rho_h^n, c_h^n) \right\}_{n=0}^N$ obtained from \eqref{eq:fully_discrete}. To utilize our stability framework, reconstruction is necessary since inserting the numerical solution $\left\{ (\rho_{h}^n, c_{h}^n) \right\}_{n=0}^N$ into \eqref{eq:KS1}--\eqref{eq:KS2} does not make sense and would result in singular residuals. 

Let us first define the \emph{time reconstruction} $\bar{\rho}^{\tau}:[0,T] \rightarrow V_h$ of $\left\{ \rho_h^n \right\}_{n=0}^N$ as the linear interpolant at temporal points $\{t_n\}_{n=0}^N$. That is, for each $n=0,\cdots, N-1$ we define
\begin{equation} \label{eq:time_reconstruction}
    \bar{\rho}^{\tau}(s) := \frac{t_{n+1} - s}{t_{n+1} - t_n} \rho_h^n + \frac{s - t_n}{t_{n+1} - t_n} \rho_h^{n+1} \quad \quad \text{for } s \in I^{n}:=[t_n, t_{n+1}].
\end{equation}
The reconstruction $\bar{c}^{\tau}:[0,T] \rightarrow V_h$ of $\left\{ c_h^n \right\}_{n=0}^N$ is defined analogously. Note that $\bar{\rho}^{\tau}$ and $\bar{c}^{\tau}$ are globally continuous in time.
Then
\begin{equation} \nonumber
    \begin{aligned}
        \left( \partial_t \bar{\rho}^{\tau} , \phi_h \right)_{L^2(\Omega)} &= \left( \frac{\rho_h^{n+1} - \rho_h^{n}}{\tau_n}, \phi_h \right)_{L^2(\Omega)} \\ 
        &= - a_h^{sip}(\rho_h^{n+1}, \phi_h) + a_h^{wsip}(\rho_h^n; c_h^{n+1}, \phi_h) \quad \forall \quad \phi_h \in V_h.
    \end{aligned}
\end{equation}
Thus, we define the \emph{temporal residual} $R^{\tau} \in V_h$ to satisfy for all $\phi_h \in V_h$: 
\begin{equation}
    \begin{aligned}
        &\left( R^{\tau}, \phi_h \right)_{L^2(\Omega)} \\
        &:= \left( \partial_t \bar{\rho}^{\tau}, \phi_h \right)_{L^2(\Omega)} + a_h^{sip}(\bar{\rho}^{\tau}, \phi_h) - a_h^{wsip}(\bar{\rho}^{\tau}; \bar{c}^{\tau}, \phi_h) \\
        &\ = a_h^{sip}(\bar{\rho}^{\tau}, \phi_h) - a_h^{wsip}(\bar{\rho}^{\tau}; \bar{c}^{\tau}, \phi_h) - a_h^{sip}(\rho_h^{n+1}, \phi_h) + a_h^{wsip}(\rho_h^n; c_h^{n+1}, \phi_h) \\
        \label{eq:temporal_residual}
    \end{aligned}
\end{equation}
We expect that the residual $R^{\tau}$ is of order $O(\tau)$ where $\tau:=\max_{n}\tau_n$.

Now we define spatial reconstructions of the (pointwise) dG solutions $\left( \bar{\rho}^{\tau}, \bar{c}^{\tau} \right)$ by employing so-called  \emph{elliptic reconstruction} \cite{Makridakis2003,Makridakis2007}, which is a well-known method that leads to residuals that have optimal order in several other problems. The residual serves as a perturbation of the PDE with optimal order. We will make use of this fact to argue that our estimator is of optimal order.

For notational convenience, we use the abbreviations
\begin{equation} \nonumber
    \left( \cdot, \cdot \right) := \left( \cdot, \cdot \right)_{L^2(\Omega)} \quad \text{and} \quad \left\langle \cdot, \cdot \right\rangle := \left\langle \cdot, \cdot \right\rangle_{H^{-1}(\Omega), H^1(\Omega)}.
\end{equation} 
For the sake of brevity, we will also omit the time dependency of functions when the context makes it clear.
Let $A_h:V_h \longrightarrow V_h$ be the following discrete version of the operator $-\Delta$:
\begin{equation} \label{def:linearform}
     \left( A_hv_h, w_h \right) := a^{sip}_h\left( v_h, w_h \right) \quad \quad \text{for all } v_h, w_h \in V_h.
\end{equation}
For any fixed time $t \in [0,T]$, we define the \emph{space-time reconstruction} $\bar{\rho}^{\xi}(t, \cdot) \in H^1(\Omega)$ of $\bar{\rho}^{\tau}(t, \cdot)$ as the solution of the elliptic problem
\begin{equation} \label{eq:reconstruct}
    \begin{aligned}
        -\Delta\bar{\rho}^{\xi} &= A_h\bar{\rho}^{\tau}  \ \quad \text{in } \Omega  \\ 
       \nabla \bar{\rho}^{\xi}\cdot \mathrm{n} &= 0  \quad \quad \quad \text{on } \partial \Omega
    \end{aligned}
\end{equation}
such that $\bar{\rho}^{\xi}\left( t, \cdot \right)$ has the same mean value as the discrete solution, i.e.,
\begin{equation} \label{eq:reconstruct_mean}
    \int_\Omega \bar{\rho}^{\xi}\left( t, \cdot \right) dx = \int_\Omega \bar{\rho}^{\tau}(t, \cdot) dx.
\end{equation}
More precisely, $\bar{\rho}^{\xi}(t, \cdot)$ satisfies the following:
\begin{equation} \nonumber
    a(\bar{\rho}^{\xi}(t), \phi):=(\nabla\bar{\rho}^{\xi}(t), \nabla \phi)= \left( A_h\bar{\rho}^{\tau}(t), \phi \right) \quad \text{for all} \ \phi \in H^1(\Omega).
\end{equation} 
Note that $\bar{\rho}^{\tau}(t, \cdot)$ is the SIP-dG solution to \eqref{eq:reconstruct}-\eqref{eq:reconstruct_mean} due to the definition \eqref{def:linearform}.
Similarly, for a fixed time $t \in [0,T]$, let us define the reconstruction $\tilde{c}^{\xi}(t, \cdot) \in H^1(\Omega)$ of $\bar{c}^{\tau}(t, \cdot)$ as the solution of the elliptic problem 
\begin{equation} \label{eq:reconstruct2}
    \begin{aligned}
        -\Delta\tilde{c}^{\xi} + \tilde{c}^{\xi} &= \bar{\rho}^{\tau}  \ \quad \text{in } \Omega  \\ 
        \nabla \tilde{c}^{\xi}\cdot \mathrm{n} &= 0  \quad \quad \text{on } \partial \Omega.
    \end{aligned}
\end{equation}
That is, it satisfies 
\begin{equation} \nonumber
     a( \tilde{c}^{\xi}(t), \psi ) + \left( \tilde{c}^{\xi}(t), \psi \right)  = ( \bar{\rho}^{\tau}(t), \psi ) \quad \text{for all }\psi \in H^1(\Omega).
\end{equation}
Finally, we define $\bar{c}^{\xi}(t) \in H^1(\Omega)$ as the solution to the elliptic problem:
\begin{equation} \label{eq:reconstruct3}
    \begin{aligned}
        -\Delta\bar{c}^{\xi} + \bar{c}^{\xi} &= \bar{\rho}^{\xi}  \ \quad \text{in } \Omega  \\ 
        \nabla \bar{c}^{\xi}\cdot \mathrm{n} &= 0  \quad \quad \text{on } \partial \Omega.
    \end{aligned}
\end{equation}
In other words, it satisfies
\begin{equation} \nonumber
     a( \bar{c}^{\xi}(t), \chi ) + \left( \bar{c}^{\xi}(t), \chi \right)  = ( \bar{\rho}^{\xi}(t), \chi ) \quad \text{for all }\chi \in H^1(\Omega).
\end{equation}

\begin{remark} [Regularity and continuity of reconstructions] \label{remark:regularity}
    For any fixed time $t \in [0, T]$, we have
    \begin{equation} \nonumber
        \bar{\rho}^{\xi}(t, \cdot) \in H^2(\Omega), \quad \tilde{c}^{\xi}(t, \cdot) \in H^2(\Omega),
    \end{equation}
    owing to elliptic regularity. Additionally, using elliptic regularity and the fact $\bar{\rho}^{\xi} \in H^2(\Omega)$,  we obtain 
    \begin{equation} \nonumber
        \bar{c}^{\xi}(t, \cdot) \in H^4(\Omega).
    \end{equation}
    The continuity (in time) of $\bar{\rho}^{\tau}$ implies that $\bar{\rho}^{\xi}$, $\tilde{c}^{\xi}$, and $\bar{c}^{\xi}$ are also continuous in time.
    Thus we have
    \begin{equation} \nonumber
        \bar{\rho}^{\xi} \in C( [0,T]; H^2(\Omega)), \quad \tilde{c}^{\xi} \in C ( [0,T]; H^2(\Omega) ), \quad \bar{c}^{\xi} \in C ( [0,T]; H^4(\Omega) ) .
    \end{equation}
    Since the bilinear form $a(\cdot, \cdot)$ is time-independent, it follows that
    \begin{equation} \nonumber
        a(\partial_t\bar{\rho}^{\xi}, \phi) = \langle \partial_t A_h\bar{\rho}^{\tau}(t), \phi \rangle
    \end{equation} 
    for all $\phi \in H^1(\Omega)$. 
\end{remark}

\begin{remark} [a posteriori control for elliptic problems]
    Reliable and efficient a posteriori error estimators for the $L^2$-norm error associated with continuous finite element discretisations of Poisson's equation are detailed in \cite[ (4.4), (4.5)]{Makridakis2003}. These estimators can be adapted to SIP-dG schemes in a straightforward manner.
    Moreover, \cite[Theorem 3.1]{Karakashian_2003} provides reliable and efficient a posteriori error estimators for the dG-norm error arising from interior penalty dG discretisations of Poisson's equation.
    We also introduce an a posteriori error estimator in terms of the $H^{-1}$-norm.
    To state all these estimators concisely, we define
    \[ R_T [\bar{\rho}^{\tau}, f]:= \left\| \Delta \bar{\rho}^{\tau} + f \right\|_{L^2(T)}^2, \ 
     R_F^1 [\bar{\rho}^{\tau}]:= \left\| \llbracket \nabla_h \bar{\rho}^{\tau} \rrbracket \cdot \mathrm{n}_F \right\|_{L^2(F)}^2, \ R_F^0 [\bar{\rho}^{\tau}]:= \left\| \llbracket \bar{\rho}^{\tau} \rrbracket \right\|_{L^2(F)}^2.
    \]
    Then we have the following error estimates:
    \begin{eqnarray}
        \left\| \bar{\rho}^{\tau} - \bar{\rho}^{\xi}  \right\|_{L^2(\Omega)} &\leq& C_0 \operatorname{E}_0\left[ \bar{\rho}^{\tau}, A_h\bar{\rho}^{\tau} \right]\label{rhoest1}\\
         \| \bar{\rho}^{\tau} - \bar{\rho}^{\xi}\|_{\mathrm{dG}} &\leq& C_1 \operatorname{E}_1[\bar{\rho}^{\tau}, A_h\bar{\rho}^{\tau}]\label{rhoxest1}\\
        \left\| \partial_t \bar{\rho}^{\tau} - \partial_t \bar{\rho}^{\xi} \right\|_{H^{-1}(\Omega)} &\leq& C_{-1} \operatorname{E}_{-1}\left[ \partial_t \bar{\rho}^{\tau}, \partial_t A_h\bar{\rho}^{\tau} \right]\label{rhotest1}
    \end{eqnarray}
    where the dG-norm is defined as
        \begin{equation} \nonumber
        \|v\|_{\mathrm{dG}}^2 :=\left\|\nabla_h v\right\|_{\left[L^2(\Omega)\right]^d}^2+\sum_{F \in \mathcal{F}_h} \frac{1}{h_F}\|\llbracket v \rrbracket \|_{L^2(F)}^2,
    \end{equation}
    and the elliptic error estimators are defined as, for $k\geq1$,
    \begin{eqnarray} \label{rhoest2}
       \operatorname{E}_0\left[ \bar{\rho}^{\tau}, f_h \right]^2 := \sum_{T \in \mathcal{T}_h} h^4_T R_T [\bar{\rho}^{\tau}, f_h] 
       + \sum_{F \in \mathcal{F}_h^i} h_F^3 R_F^1 [\bar{\rho}^{\tau}] + \eta^2 \sum_{F \in \mathcal{F}_h^i} h_F R_F^0 [\bar{\rho}^{\tau}],
       \\
      \operatorname{E}_1\left[ \bar{\rho}^{\tau}, f_h \right]^2 :=\sum_{T \in \mathcal{T}_h} h^2_T R_T [\bar{\rho}^{\tau}, f_h] 
       + \sum_{F \in \mathcal{F}_h^i} h_F R_F^1 [\bar{\rho}^{\tau}] + \eta^2 \sum_{F \in \mathcal{F}_h^i} h_F^{-1} R_F^0 [\bar{\rho}^{\tau}],
    \end{eqnarray}
    and for $k \geq 2 $,
    \begin{equation}
        \label{rhotest2}
             \operatorname{E}_{-1}\left[ \bar{\rho}^{\tau}, f_h \right]^2 := \sum_{T \in \mathcal{T}_h} h^6_T R_T [\bar{\rho}^{\tau}, f_h] 
             + \sum_{F \in \mathcal{F}_h^i} h_F^5 R_F^1 [\bar{\rho}^{\tau}] + \eta^2 \sum_{F \in \mathcal{F}_h^i} h_F^{3} R_F^0 [\bar{\rho}^{\tau}].
    \end{equation}
    The real numbers $C_0$, $C_1$, and $C_{-1}$ are positive constants dependent only on the regularity of $\Omega$. They can be evaluated on a convex polygonal domain, see \cite[Theorem 5.45]{Di_Pietro_2012}
    
    Analogously, we have
    \[  \| \bar{c}^{\tau} - \tilde{c}^{\xi}\|_{\operatorname{dG}} \leq \tilde{C}_1 \operatorname{\tilde{E}}_1[\bar{c}^{\tau}, \bar{\rho}^{\tau}]\]
     with
    \begin{multline}\label{cest} 
            \operatorname{\tilde{E}}_1[\bar{c}^{\tau}, f_h]^2
            := 
            \sum_{T\in\grid_h}
            h_T^2 \|f_h - \bar{c}^{\tau} + \Delta \bar{c}^{\tau}\|_{L^2(T)}^2\\
           +
            \sum_{F\in \mathcal{F}_h} h_F \| \jump{\nabla \bar{c}^{\tau}}\cdot \mathrm{n}_F \|_{L^2(F)}^2
            +
            \sigma^2 \sum_{F\in \mathcal{F}_h} h_F^{-1} \|\jump{\bar{c}^{\tau}}\|_{L^2(F)}^2.
    \end{multline}
   The positive real number $\tilde{C}_1$ also can be evaluated as previously mentioned.
   
   Finally, $\| \tilde{c}^{\xi} - \bar{c}^{\xi}\|_{H^1(\Omega)}$ is controlled by $\| \bar{\rho}^{\tau} - \bar{\rho}^{\xi}\|_{L_2(\Omega)},$
   which is controlled by $\operatorname{E}_0[\bar{\rho}^{\tau}, A_h\bar{\rho}^{\tau}]$.
   Thus we have
   \begin{equation} \label{eq:cest2}
     \| \bar{c}^{\tau} - \bar{c}^{\xi}\|_{\operatorname{dG}} \leq C_{ell}C_0\operatorname{E}_0[\bar{\rho}^{\tau}, A_h\bar{\rho}^{\tau}] + \tilde{C}_1\operatorname{\tilde{E}}_1[\bar{c}^{\tau}, \bar{\rho}^{\tau}]. 
   \end{equation}

   For all elliptic error estimators and generic functions $u_h \in V_h$ we subsequently write $E_i[u_h]:= E_i[u_h, A_h u_h]$ for brevity. 
    
\end{remark}

\noindent
For any fixed $t \in [0, T]$, we define the \emph{space-time residual} $R^{\xi}(t, \cdot) \in H^{-1}(\Omega)$ as 
\begin{equation} \label{eq:residualpair}
    \left\langle R^{\xi}, \phi \right\rangle 
    := \left\langle \partial_t\bar{\rho}^{\xi}, \phi \right\rangle
    - \left( \bar{\rho}^{\xi}\nabla \bar{c}^{\xi} - \nabla\bar{\rho}^{\xi}, \nabla \phi \right)
\end{equation}
for all $\phi \in H^1(\Omega)$. The regularities mentioned in Remark \ref{remark:regularity} ensure that the above definition is well-defined, as long as $d \leq 3$, and lead to $R^{\xi} \in L^2(0,T; H^{-1}(\Omega))$.
This allows us to interpret $(\bar{\rho}^{\xi}, \bar{c}^{\xi})$ as a strong solution to the perturbed problem:
\begin{equation} \label{eq:strong_formulation}
    \begin{matrix}
        \begin{aligned}
            \partial_t\bar{\rho}^{\xi} +\nabla\cdot(\bar{\rho}^{\xi}\nabla \bar{c}^{\xi} - \nabla \bar{\rho}^{\xi}) &=: R^{\xi} \\
            \bar{c}^{\xi} - \Delta \bar{c}^{\xi} &= \bar{\rho}^{\xi}
        \end{aligned}
        & \text{in} \ (0,T)\times\Omega.
    \end{matrix}
\end{equation}
\begin{remark} [Stability framework]
    The problem \eqref{eq:strong_formulation} falls into the situation we established in Section \ref{section:a_stability_framework}. Thus, the stability framework of Theorem \ref{thm:stabilityest} is applicable to control the difference $\rho - \bar{\rho}^{\xi}$ in terms of the initial data and the residual $R^{\xi}$.
\end{remark}
Since the reconstructions are obtained as exact solutions to elliptic problems, estimating the norm of the residual requires some work.

\section{Controlling the $H^{-1}$-norm of the residual }\label{section:aposteriori}
The aim of this section is to establish a computable upper bound for $\left\| \mathcal{R^{\xi}}(t, \cdot) \right\|_{H^{-1}(\Omega)}$ for a.e. $t \in [0,T]$.
Let $\left\{ \mathcal{T}_h \right\}_{h>0}$ be an admissible mesh sequence.
We first recall the $L^2$-orthogonal projection and its approximation properties:
\begin{definition}[$L^2$-orthogonal projection] 
    Let $\pi_h$ denote the \emph{$L^2$-orthogonal projection} onto $V_h$, that is, $\pi_h: L^2(\Omega) \rightarrow$ $V_h$ is defined such that, for all $v \in L^2(\Omega)$, $\pi_h v $ belongs to $V_h$ and satisfies
\begin{equation*}
\left(\pi_h v, y_h\right)
=\left(v, y_h\right)
\quad \forall y_h \in V_h .
\end{equation*}
\end{definition}

\begin{lemma} \label{thm:opL2proj} \cite[Lemma 1.58]{Di_Pietro_2012}
    Let $\pi_h$ be the $L^2$-orthogonal projection onto $V_h$, and let $s \in\{0, \ldots, k+1\}$. Then, for all $T \in \mathcal{T}_h$, and all $v \in H^s(T)$, we have
    \begin{equation} \nonumber
        \left|v-\pi_h v\right|_{H^m(T)} \leq C_{\text {app }}^{\prime} h_T^{s-m}|v|_{H^s(T)} \quad \forall m \in\{0, \ldots, s\}
    \end{equation}
    where the positive real number $C_{\text {app }}^{\prime}$ is independent of both $T$ and $h$.
\end{lemma}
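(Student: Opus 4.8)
The statement is a classical approximation property of the $L^2$-orthogonal projection, and the plan is to deduce it from two facts guaranteed by admissibility of the mesh sequence: the optimal polynomial approximation property and a local inverse inequality. The first observation is that, because $V_h=\mathbb{P}_d^k(\mathcal{T}_h)$ consists of \emph{discontinuous} piecewise polynomials, the global projection $\pi_h$ decouples element by element: testing with functions supported on a single cell $T$ shows that $(\pi_h v)|_T$ is exactly the $L^2(T)$-orthogonal projection of $v|_T$ onto $\mathbb{P}_d^k(T)$. In particular $\pi_h$ reproduces polynomials on each cell and is $L^2$-stable, $\|\pi_h v\|_{L^2(T)}\le\|v\|_{L^2(T)}$. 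For $m=0$ this already gives the claim: by optimality $\|v-\pi_h v\|_{L^2(T)}=\inf_{w\in\mathbb{P}_d^k(T)}\|v-w\|_{L^2(T)}$, and the optimal polynomial approximation property supplies a $w$ with $\|v-w\|_{L^2(T)}\le C\,h_T^{s}|v|_{H^s(T)}$.

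For general $m\in\{0,\dots,s\}$ I would fix on each $T$ a polynomial $w\in\mathbb{P}_d^k(T)$ furnished by the optimal polynomial approximation property, so that $|v-w|_{H^m(T)}\le C\,h_T^{s-m}|v|_{H^s(T)}$ holds simultaneously for all admissible $m$. Splitting $v-\pi_h v=(v-w)+(w-\pi_h v)$ and using $\pi_h w=w$ on $T$, the second summand equals $-\pi_h(v-w)$, which is a polynomial in $\mathbb{P}_d^k(T)$. Applying the inverse inequality $|q|_{H^m(T)}\le C_{\mathrm{inv}}\,h_T^{-m}\|q\|_{L^2(T)}$ for $q\in\mathbb{P}_d^k(T)$ and then $L^2$-stability of $\pi_h$ gives
\begin{equation*}
|w-\pi_h v|_{H^m(T)}=|\pi_h(v-w)|_{H^m(T)}\le C_{\mathrm{inv}}\,h_T^{-m}\|\pi_h(v-w)\|_{L^2(T)}\le C_{\mathrm{inv}}\,h_T^{-m}\|v-w\|_{L^2(T)}.
\end{equation*}
Bounding the last factor by $C\,h_T^{s}|v|_{H^s(T)}$ (the approximation property with $m=0$) and combining with the direct estimate of $|v-w|_{H^m(T)}$ through the triangle inequality yields $|v-\pi_h v|_{H^m(T)}\le C_{\mathrm{app}}'\,h_T^{s-m}|v|_{H^s(T)}$, as asserted.

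The point requiring the most care is that every constant must be independent of $T$ and of $h$. Both the optimal polynomial approximation property and the inverse inequality hold with such uniform constants precisely because the mesh sequence is assumed \emph{admissible}, i.e.\ shape- and contact-regular; shape-regularity is exactly what enters the inverse inequality, which one proves on a reference configuration and transports to $T$ by a scaling argument producing the factor $h_T^{-m}$ while keeping the transported constant bounded. Since these two ingredients are what admissibility provides (and are established in the cited reference), the final constant $C_{\mathrm{app}}'$ depends only on $k$, $d$, and the regularity parameters of the mesh family, and no further hypotheses are needed.
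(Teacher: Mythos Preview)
Your argument is correct and is exactly the standard proof one finds in the literature. Note, however, that the paper does not actually prove this lemma: it is stated with a direct citation to \cite[Lemma~1.58]{Di_Pietro_2012} and no proof is given in the manuscript itself. Your write-up therefore supplies more detail than the paper does, and the route you take (local decoupling of $\pi_h$, best-approximation for $m=0$, then triangle inequality plus inverse inequality on the polynomial part for $m\ge 1$) is precisely the argument in the cited reference.
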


\begin{lemma} \label{thm:opL2projmf} \cite[Lemma 1.59]{Di_Pietro_2012}
     Under the hypotheses of Lemma \ref{thm:opL2proj}, assume additionally that $s \geq 1$. Then, for all $T \in \mathcal{T}_h$, and all $F \in \mathcal{F}_T$, we have
\begin{equation*}
\left\|v-\pi_h v\right\|_{L^2(F)} \leq C_{\mathrm{app}}^{\prime \prime} h_T^{s-1 / 2}|v|_{H^s(T)},
\end{equation*}
and if $s \geq 2$,
\begin{equation*}
\left\|\left.\nabla\left(v-\pi_h v\right)\right|_T \cdot \mathrm{n}_T\right\|_{L^2(F)} \leq C_{\mathrm{app}}^{\prime \prime \prime} h_T^{s-3 / 2}|v|_{H^s(T)},
\end{equation*}
where the positive numbers $C_{\mathrm{app}}^{\prime \prime}$ and $C_{\mathrm{app}}^{\prime \prime \prime}$ are independent of both $T$ and $h$.
\end{lemma}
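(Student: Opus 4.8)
The plan is to combine a scaled (multiplicative) trace inequality on a single mesh cell with the local approximation estimates already recorded in Lemma \ref{thm:opL2proj}. The trace inequality lets one pass from a norm on the face $F$ to volume norms on the cell $T$ at the price of explicit negative and positive powers of $h_T$, and these powers are exactly what is needed to match the claimed $h_T$-scaling. Throughout, shape- and contact-regularity of the admissible mesh sequence guarantees that the constants produced by scaling from a reference element are independent of $T$ and $h$.

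First I would establish, for $w \in H^1(T)$ and any $F \in \mathcal{F}_T$, the scaled trace inequality
\[\|w\|_{L^2(F)}^2 \leq C\left(h_T^{-1}\|w\|_{L^2(T)}^2 + h_T |w|_{H^1(T)}^2\right),\]
obtained by writing the standard trace inequality on the reference cell and transporting it to $T$ via the affine map: the volume element contributes $h_T^d$, the face measure $h_T^{d-1}$, and each gradient an $h_T^{-1}$, so that after dividing out the common factor $h_T^{d-1}$ one is left with precisely the two displayed powers. Applying this with $w = v - \pi_h v$ and inserting the bounds $\|v-\pi_h v\|_{L^2(T)} \le C_{\mathrm{app}}' h_T^{s}|v|_{H^s(T)}$ (case $m=0$) and $|v-\pi_h v|_{H^1(T)} \le C_{\mathrm{app}}' h_T^{s-1}|v|_{H^s(T)}$ (case $m=1$, which requires $s \ge 1$) from Lemma \ref{thm:opL2proj} renders both terms on the right a constant multiple of $h_T^{2s-1}|v|_{H^s(T)}^2$; taking square roots gives the first estimate.

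The second estimate follows the same pattern applied to the normal derivative. Here I would use the analogous trace inequality
\[\|\nabla w|_T \cdot \mathrm{n}_T\|_{L^2(F)}^2 \leq C\left(h_T^{-1}|w|_{H^1(T)}^2 + h_T |w|_{H^2(T)}^2\right),\]
valid for $w \in H^2(T)$, which is the reason the hypothesis is strengthened to $s \ge 2$. Taking $w = v - \pi_h v$ and substituting $|v-\pi_h v|_{H^1(T)} \le C_{\mathrm{app}}' h_T^{s-1}|v|_{H^s(T)}$ and $|v-\pi_h v|_{H^2(T)} \le C_{\mathrm{app}}' h_T^{s-2}|v|_{H^s(T)}$ again collapses both contributions to a multiple of $h_T^{2s-3}|v|_{H^s(T)}^2$, and a square root yields the claim.

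The step I expect to require the most care is the derivation of the two scaled trace inequalities with the correct $h_T$-exponents, since this is exactly where mesh regularity enters: one must verify that the reference-element trace constant is inherited uniformly across the family $\mathcal{T}_h$ under the shape-regular affine maps, and that the chain-rule factors from differentiating the pullback produce exactly the stated powers of $h_T$ (in particular the extra $h_T^{-1}$ per derivative, which is what shifts the exponent from $s-1/2$ in the first estimate to $s-3/2$ in the second). Once these scaling relations are in place, the remainder is a direct substitution of Lemma \ref{thm:opL2proj}.
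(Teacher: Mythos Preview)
The paper does not supply its own proof of this lemma; it is quoted verbatim from \cite[Lemma~1.59]{Di_Pietro_2012} and used as a black box. Your proposed argument---scaled multiplicative trace inequality on a cell combined with the volume estimates of Lemma~\ref{thm:opL2proj}---is precisely the standard proof one finds in that reference, so there is nothing to compare and your outline is correct.
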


We now provide a computable upper bound for $\left\| \mathcal{R^{\xi}}(t, \cdot) \right\|_{H^{-1}(\Omega)}$ for a.e. $t \in [0,T]$. For brevity, we omit the time dependency:
\begin{lemma} [a posteriori control on $R^{\xi}$]\label{lem:Erho} 
    Let $\left( \bar{\rho}^{\tau}, \bar{c}^{\tau} \right)$ be the temporal reconstruction of the numerical solution $\left\{ (\rho_h^n, c_h^n) \right\}_{n=0}^N$ obtained from \eqref{eq:fully_discrete}. Let $R^{\tau}$ be the temporal residual as defined in \eqref{eq:temporal_residual}
    and $R^{\xi}$ the space-time residual as specified in \eqref{eq:residualpair}. Then, for $k\geq1$, we have the following estimate:
    \begin{equation} \label{eq:R_rho}
        \begin{aligned}
            &\left\|R^{\xi}\right\|_{H^{-1}(\Omega)} \\
            &\leq C_{*}\operatorname{E}_*\left[\partial_t \bar{\rho}^{\tau}
            \right] \\
            &\quad + 2C_S'' C_{ell} C_0 \operatorname{E}_0\left[\bar{\rho}^{\tau}
            \right]
            \bigg( C_1^2 \operatorname{E}_1\left[\bar{\rho}^{\tau}
            \right]^2 + \sum_{T \in \mathcal{T}_h} \left| \bar{\rho}^{\tau} \right|_{H^1(T)}^2 \bigg)^{1/2} \\
            &\quad + \left( \sum_{T \in \mathcal{T}_h} C_{app}'^2 h_T^2 \left\| \nabla_h \cdot \left( \bar{\rho}^{\tau}\nabla_h \bar{c}^{\tau} \right) - \pi_h\bigg( \nabla_h \cdot\left(\bar{\rho}^{\tau} \nabla_h \bar{c}^{\tau}\right) \bigg) \right\|_{L^2(T)}^2 \right)^{1/2}\\
            &\quad + \sqrt{2}N_\partial^{1/2}\left\{ \left( C_{app}' + 1 \right)^2 + 1 \right\}^{1/2} C_{max}^{1/2} \left\| \bar{\rho}^{\tau} \right\|_{L^\infty(\Omega)} \left( C_{ell}^2C_0^2\operatorname{E}_0[\bar{\rho}^{\tau}
            ]^2 + \tilde{C}_1^2\operatorname{\tilde{E}}_1[\bar{c}^{\tau}, \bar{\rho}^{\tau}]^2 \right)^{1/2} \\
            &\quad + N_\partial C_{app}'' \left\| \nabla_h \bar{c}^{\tau} \right\|_{L^\infty(\Omega)} \bigg( \sum_{F \in \mathcal{F}_h^i} h_T \left\| \llbracket \bar{\rho}^{\tau} \rrbracket \right\|_{L^2(F)}^2 \bigg)^{1/2} + \left\| R^{\tau} \right\|_{L^2(\Omega)}\\
            &=: \operatorname{E}_{R^{\xi}}
        \end{aligned}
    \end{equation}
    where 
    the subscript $*$ of $C_*$ and $\operatorname{E}_*$ is defined as $0$ if $k=1$ and as $-1$ if $k\geq 2$.
    The estimators $\operatorname{E}_{-1}, \operatorname{E}_0$, and $\operatorname{\tilde{E}}_1$ are defined in equations \eqref{rhotest2}, \eqref{rhoest2}, and \eqref{cest}, respectively. The constant $N_{\partial}$ is defined as $N_{\partial} := \max_{T \in \mathcal{T}_h} \operatorname{card}(\mathcal{F}_T)$. Constants $C_{app}'$ and $C_{app}''$ are specified in Lemmas \ref{thm:opL2proj} and \ref{thm:opL2projmf}, respectively. $C_S''$ is the embedding constant from $H^2$ to $L^\infty$, and $C_{max}$ is defined within the proof.
\end{lemma}

Prior to proving the lemma, there are several remarks to be made.

\begin{remark} [$L^2$-orthogonality]
    Since $\nabla_h \cdot (\bar{\rho}^{\tau} \nabla_h \bar{c}^{\tau})$ is a polynomial of degree $2k-2$, the third term in the right-hand side of \eqref{eq:R_rho} vanishes for $k \leq 2$, i.e.
    \begin{equation*}
        \sum_{T \in \mathcal{T}_h} C_{app}'^2 h_T^2 \left\| \nabla_h \cdot \left( \bar{\rho}^{\tau}\nabla_h \bar{c}^{\tau} \right) - \pi_h\left( \nabla_h \cdot\left(\bar{\rho}^{\tau} \nabla_h \bar{c}^{\tau}\right) \right) \right\|_{L^2(T)}^2=0.
    \end{equation*}
    
\end{remark}

\begin{remark}[Optimality of the estimator] \label{rmk:optimal}
    The term $\left\| \operatorname{E}_{R^{\xi}} \right\|_{L^2(0,T)}$ enters linearly into the error estimator \eqref{eq:fullestimator} for the error of our dG scheme measured in the norm $\sqrt{\left\| \cdot \right\|_{L^\infty(L^2)}^2 + \int_0^T \left\| \cdot \right\|_{dG}^2dt}$, and we have used first-order time discretisation. Thus, for a sufficiently smooth exact solution, the optimal scaling of $\operatorname{E}_{R^{\xi}}$ would be $h^k+\tau^1$.
    Indeed, the norms $\left\|\bar{\rho}^{\tau}\right\|_{L^{\infty}(\Omega)}$ and $\left\|\nabla_h \bar{c}^{\tau}\right\|_{L^{\infty}(\Omega)}$ are bounded uniformly in $h$ before blow-up in our simulations. Moreover, the solutions of the elliptic problems \eqref{eq:reconstruct}, \eqref{eq:reconstruct2} and \eqref{eq:reconstruct3} are sufficiently regular so that $\operatorname{E}_0$ is of order $h^{k+1}$, and $\operatorname{E}_1$ and $\tilde{\operatorname{E}}_1$ are of order $h^k$. When $\operatorname{E}_{0}$ and $\operatorname{E}_{-1}$ are evaluated on $\partial_t \bar{\rho}^{\tau}$, their orders are $h^{k+1}$ and $h^{k+2}$, respectively; See Tables \ref{tab:time_derivative_Es} and \ref{tab:time_derivative_Es_2}. Additionally,
    \begin{equation}\nonumber
        \left(\sum_{F \in \mathcal{F}_h^i} h_F \left\| \llbracket \bar{\rho}^{\tau} \rrbracket \right\|_{L^2(F)}^2\right)^{1/2}
    \end{equation}
    is of order $h^{k+1}$. The term $\left\| R^{\tau} \right\|_{L^2(\Omega)}$ is expected to be of order $\tau^1$.
\end{remark}

\begin{proof}[Proof of Lemma \ref{lem:Erho}]
    
Let $\pi_h$ be the $L^2$-orthogonal projection onto $V_h$. For $\phi \in H^1(\Omega)$,
we have
\begin{equation} \label{eq:residual}
    \begin{aligned}
        \left\langle R^{\xi}, \phi\right\rangle
        =&\left\langle \partial_t\bar{\rho}^{\xi}, \phi\right\rangle
        - (\bar{\rho}^{\xi} \nabla \bar{c}^{\xi}, \nabla\phi)
        + (\nabla \bar{\rho}^{\xi}, \nabla\phi)
        \\
        =& \left\langle \partial_t\bar{\rho}^{\xi}, \phi\right\rangle
        - (\bar{\rho}^{\xi} \nabla \bar{c}^{\xi}, \nabla\phi)
        + \left( A_h\bar{\rho}^{\tau}, \phi \right)
        \\
        =& \left\langle \partial_t\bar{\rho}^{\xi}, \phi\right\rangle
        - (\bar{\rho}^{\xi} \nabla \bar{c}^{\xi}, \nabla\phi)
        + a^{sip}_h(\bar{\rho}^{\tau}, \pi_h\phi) \\
        =& \left\langle \partial_t\bar{\rho}^{\xi}, \phi\right\rangle
        - (\bar{\rho}^{\xi} \nabla \bar{c}^{\xi}, \nabla\phi)
        + a^{wsip}_h(\bar{\rho}^{\tau}; \bar{c}^{\tau}, \pi_h\phi) - (\partial_t \bar{\rho}^{\tau}, \pi_h\phi) + \left( R^{\tau}, \pi_h \phi \right)
        \\
    \end{aligned}
\end{equation}
where we utilized $L^2$-orthogonality in the third equality and equation \eqref{eq:temporal_residual} in the last equality.
By applying $L^2$-orthogonality once more, we obtain:
\begin{equation} \nonumber
    \begin{aligned}
        \left( \partial_t \bar{\rho}^{\tau}, \pi_h \phi \right)
        &= \left( \partial_t \bar{\rho}^{\tau}, \phi \right)
        = \left\langle \partial_t \bar{\rho}^{\tau}, \phi \right\rangle, \\
        (R^{\tau}, \pi_h \phi) &= (R^{\tau}, \phi).        
    \end{aligned}
\end{equation}
Hence we can rewrite \eqref{eq:residual} as follows:
    \begin{equation} \nonumber
         \left\langle R^{\xi}, \phi\right\rangle
         = \underbrace{\left\langle \partial_t\bar{\rho}^{\xi} - \partial_t\bar{\rho}^{\tau}, \phi\right\rangle}_{:=I_1}
         \\
         \underbrace{- (\bar{\rho}^{\xi} \nabla \bar{c}^{\xi}, \nabla\phi)
         + a^{wsip}_h(\bar{\rho}^{\tau}; \bar{c}^{\tau}, \pi_h\phi)}_{:=I_2} + \left( R^{\tau}, \phi \right).
    \end{equation}
It is obvious that
\begin{equation} \nonumber
    \left( R^{\tau}, \phi \right) \leq \left\| R^{\tau} \right\|_{L^2(\Omega)}\left\| \phi \right\|_{L^2(\Omega)} \leq \left\| R^{\tau} \right\|_{L^2(\Omega)}\left\| \phi \right\|_{H^1(\Omega)} .
\end{equation}
We provide separate bounds for $I_1$ and $I_2$ as follows:
Firstly, using Cauchy-Schwarz inequality and \eqref{rhotest2}, we estimate $I_1$
    \begin{equation} \nonumber
        I_1 \leq 
        C_{-1}\operatorname{E}_{-1}\left[\partial_t \bar{\rho}^{\tau}, \partial_t\left(A_h \bar{\rho}^{\tau}\right)\right] \left\| \phi \right\|_{H^1(\Omega)}.
    \end{equation}
Secondly, proceeding from the definition \eqref{eq:wsip}, we write
\begin{equation} \label{eq:compute_residual}
    \begin{aligned}
        I_2 &= -\left( \bar{\rho}^{\xi}\nabla\bar{c}^{\xi} - \bar{\rho}^{\tau}\nabla_h \bar{c}^{\tau}, \nabla\phi \right)
        \\
        &\quad - \sum_{T \in \mathcal{T}_h}\int_T \left( \bar{\rho}^{\tau} \nabla_h \bar{c}^{\tau} \right)\cdot\left( \nabla\phi - \nabla_h \pi_h\phi\right) + \sum_{F \in \mathcal{F}_h^i}\int_F \sigma \frac{\gamma_{\bar{\rho}^{\tau}}}{h_F}\llbracket \bar{c}^{\tau} \rrbracket \llbracket \pi_h\phi \rrbracket \\
        &\quad -\sum_{F \in \mathcal{F}_h^i}\int_F\llbracket \pi_h\phi \rrbracket\dc{ \bar{\rho}^{\tau} \nabla_h \bar{c}^{\tau}  }_{\omega}\cdot \mathrm{n}_F - \sum_{F \in \mathcal{F}_h^i}\int_F \llbracket \bar{c}^{\tau} \rrbracket \dc{ \bar{\rho}^{\tau} \nabla_h \pi_h\phi  }_{\omega}\cdot \mathrm{n}_F. 
    \end{aligned}
\end{equation}
Next, we consider the first two terms of $I_2$:
\begin{multline} \nonumber
    -\left( \bar{\rho}^{\xi}\nabla \bar{c}^{\xi} - \bar{\rho}^{\tau} \nabla_h \bar{c}^{\tau}, \nabla \phi \right)
    \\
    = - \sum_{T \in \mathcal{T}_h} \int_T \left( \bar{\rho}^{\xi} - \bar{\rho}^{\tau} \right)\nabla \bar{c}^{\xi} \cdot \nabla \phi - \sum_{T \in \mathcal{T}_h} \int_T \bar{\rho}^{\tau} \left( \nabla \bar{c}^{\xi} - \nabla_h \bar{c}^{\tau} \right)\cdot \nabla \phi
\end{multline}
and
\begin{multline} \label{eq:compute_residual2}
        - \sum_{T \in \mathcal{T}_h}\int_T \left( \bar{\rho}^{\tau} \nabla_h \bar{c}^{\tau} \right)\cdot\left( \nabla\phi - \nabla_h \pi_h\phi\right) \\
        = \sum_{T \in \mathcal{T}_h} \int_T \nabla_h \cdot \left( \bar{\rho}^{\tau} \nabla_h \bar{c}^{\tau} \right)\left( \phi -\pi_h\phi \right) 
       - \sum_{F \in \mathcal{F}_h^i} \int_F \llbracket \bar{\rho}^{\tau} \nabla_h \bar{c}^{\tau}  \cdot \mathrm{n}_F \left( \phi -\pi_h\phi \right) \rrbracket.
\end{multline}
Additionally, utilizing the following identity
\begin{equation} \label{eq:jump_identity}
     a_1 b_1-a_2 b_2 \\
     =\left(\omega_1 a_1+\omega_2 a_2\right)\left(b_1-b_2\right)+\left(a_1-a_2\right)\left(\omega_2 b_1+\omega_1 b_2\right)
\end{equation}
where $\omega_1$ and $\omega_2$ are positive real numbers such that $\omega_1+\omega_2=1$, we can rewrite the second term on the right-hand side of \eqref{eq:compute_residual2} as:
\begin{multline*}
        - \sum_{F \in \mathcal{F}_h^i}  \int_F \llbracket \bar{\rho}^{\tau} \nabla_h \bar{c}^{\tau} \cdot \mathrm{n}_F \left( \phi - \pi_h\phi \right) \rrbracket =\\ 
         - \sum_{F \in \mathcal{F}_h^i} \int_F \dc{ \bar{\rho}^{\tau} \nabla_h \bar{c}^{\tau}  }_{\omega}\cdot \mathrm{n}_F \llbracket \phi - \pi_h \phi \rrbracket 
         - \sum_{F \in \mathcal{F}_h^i} \int_F \llbracket \bar{\rho}^{\tau} \nabla_n \bar{c}^{\tau} \rrbracket \cdot \mathrm{n}_F \dc{ \phi - \pi_h \phi   }_{\bar{\omega}}.
    \end{multline*}
Combining the above calculations, we can rewrite equation \eqref{eq:compute_residual} as:
\begin{equation} \label{eq:residual3}
    \begin{aligned}
        I_2 = & -\sum_{T \in \mathcal{T}_h} \int_T \left( \bar{\rho}^{\xi} - \bar{\rho}^{\tau} \right)\nabla \bar{c}^{\xi} \cdot \nabla \phi - \sum_{T \in \mathcal{T}_h} \int_T \bar{\rho}^{\tau} \left( \nabla \bar{c}^{\xi} - \nabla_h \bar{c}^{\tau} \right)\cdot \nabla \phi \\
        & + \sum_{T \in \mathcal{T}_h} \int_T \nabla_h \cdot \left( \bar{\rho}^{\tau} \nabla_h \bar{c}^{\tau} \right)\left( \phi -\pi_h\phi \right) \\
        & -\sum_{F \in \mathcal{F}_h^i} \int_F \llbracket \bar{\rho}^{\tau} \nabla_n \bar{c}^{\tau} \rrbracket \cdot \mathrm{n}_F \dc{ \phi - \pi_h \phi   }_{\bar{\omega}}. \\
        & + \sum_{F \in \mathcal{F}_h^i}\sigma \frac{\gamma_{\bar{\rho}^{\tau}}}{h_F}\int_F \llbracket \bar{c}^{\tau} \rrbracket \llbracket \pi_h\phi \rrbracket \\
        & - \sum_{F \in \mathcal{F}_h^i}\int_F \llbracket \bar{c}^{\tau} \rrbracket \dc{ \bar{\rho}^{\tau} \nabla_h \pi_h\phi  }_{\omega}\cdot \mathrm{n}_F \\
        =:& A_1+A_2+A_3+A_4+A_5+A_6.
    \end{aligned}
\end{equation}
Note that, in summing up \eqref{eq:residual3}, we have used the following identity:
\begin{equation} \nonumber
    \sum_{F \in \mathcal{F}_h^i}\int_F\llbracket \pi_h\phi \rrbracket\dc{ \bar{\rho}^{\tau} \nabla_h \bar{c}^{\tau}  }_{\omega}\cdot \mathrm{n}_F = \sum_{F \in \mathcal{F}_h}\int_F \llbracket \pi_h \phi - \phi \rrbracket \dc{\bar{\rho}^{\tau} \nabla_h\bar{c}^{\tau}}_{\omega}\cdot \mathrm{n}_F
\end{equation}
for all $\phi \in H^1(\Omega)$.

From now on, we will estimate each $A_i$ separately.

\paragraph{Estimate for $A_1$:}
We have
    \begin{equation} \nonumber
        \begin{aligned}
            \int_\Omega \left( \bar{\rho}^{\xi} - \bar{\rho}^{\tau} \right)\nabla \bar{c}^{\xi} \cdot \nabla \phi \leq \left\|\bar{\rho}^{\xi} - \bar{\rho}^{\tau}\right\|_{L^2(\Omega)} \left\| \nabla \bar{c}^{\xi} \right\|_{L^\infty(\Omega)} |\phi|_{H^1(\Omega)}.
        \end{aligned}
    \end{equation}
    Since $d \leq 3$, we have
    \begin{equation} \label{ineq:gradcxi}
        \left\| \nabla \bar{c}^{\xi} \right\|_{L^\infty(\Omega)} \leq C_S'' \left\| \nabla \bar{c}^{\xi} \right\|_{H^2(\Omega)}
    \end{equation}
    where $C_S''$ is the constant of the embedding $H^2 \rightarrow L^\infty$. 
    By employing elliptic regularity, we obtain
    \begin{equation} \label{ineq:gradcxi2}
            \left\| \nabla \bar{c}^{\xi} \right\|_{H^2(\Omega)} \leq C_{ell} \left\| \nabla \bar{\rho}^{\xi} \right\|_{L^2(\Omega)} \\
            \leq 2 C_{ell}\left( \sum_{T \in \mathcal{T}_h} \left| \bar{\rho}^{\xi} - \bar{\rho}^{\tau} \right|_{H^1(T)}^2 + \left| \bar{\rho}^{\tau} \right|_{H^1(T)}^2\right)^{\tfrac12}
    \end{equation}
    where $C_{ell}$ is the constant of elliptic regularity.
    Now combining \eqref{rhoest1}, \eqref{rhoxest1}, \eqref{ineq:gradcxi}, and \eqref{ineq:gradcxi2}, we get
    \begin{equation} \nonumber
        \begin{aligned}
            &\int_\Omega \left( \bar{\rho}^{\xi} - \bar{\rho}^{\tau} \right)\nabla \bar{c}^{\xi} \cdot \nabla \phi \\
            &\leq 2C_S'' C_{ell} C_0 \operatorname{E}_0\left[\bar{\rho}^{\tau}
            \right] \left( C_1^2 \operatorname{E}_1\left[\bar{\rho}^{\tau}
            \right]^2 + \sum_{T \in \mathcal{T}_h} \left| \bar{\rho}^{\tau} \right|_{H^1(T)}^2 \right)^{1/2} |\phi|_{H^1(\Omega)}.
    \end{aligned}
    \end{equation}

 \paragraph{Estimate for $A_2$:}
    
    We have
    \begin{equation} \nonumber
            \sum_{T \in \mathcal{T}_h} \int_T \bar{\rho}^{\tau} \left( \nabla \bar{c}^{\xi} - \nabla_h \bar{c}^{\tau} \right)\cdot \nabla \phi 
            \leq \left\| \bar{\rho}^{\tau} \right\|_{L^\infty(\Omega)} \sum_{T \in \mathcal{T}_h} \left| \bar{c}^{\xi} - \bar{c}^{\tau} \right|_{H^1(T)} \left| \phi \right|_{H^1(T)} 
            =: A_2'.
    \end{equation}

 \paragraph{Estimate for $A_3$:}
    Using $L^2$-orthogonality $\left( \pi_h\left( \nabla_h \cdot\left(\bar{\rho}^{\tau} \nabla_h \bar{c}^{\tau}\right) \right), \phi-\pi_h \phi \right)_{L^2} = 0$, we obtain
    \begin{equation*}
        \begin{aligned}
            \sum_{T \in \mathcal{T}_h}& \int_T \nabla_h \cdot\left(\bar{\rho}^{\tau} \nabla_h \bar{c}^{\tau}\right)\left(\phi-\pi_h \phi\right) \\
            \leq&
            \sum_{T \in \mathcal{T}_h} \left\| \nabla_h \cdot \left( \bar{\rho}^{\tau}\nabla_h \bar{c}^{\tau} \right) - \pi_h\left( \nabla_h \cdot\left(\bar{\rho}^{\tau} \nabla_h \bar{c}^{\tau}\right) \right) \right\|_{L^2(T)} \left\| \phi - \pi_h \phi \right\|_{L^2(T)} .\\
            \text{By Le}&\text{mma \ref{thm:opL2proj}, we have} \\
            \leq & \sum_{T \in \mathcal{T}_h} C_{app}' h_T \left\| \nabla_h \cdot \left( \bar{\rho}^{\tau}\nabla_h \bar{c}^{\tau} \right)
            -\pi_h\left( \nabla_h \cdot\left(\bar{\rho}^{\tau} \nabla_h \bar{c}^{\tau}\right) \right)\right\|_{L^2(T)} \left| \phi \right|_{H^1(T)}. \\
            \text{Now a}&\text{pplying Cauchy-Schwarz inequality, we get}\\
            \leq & \left( \sum_{T \in \mathcal{T}_h} C_{app}'^2 h_T^2 \left\| \nabla_h \cdot \left( \bar{\rho}^{\tau}\nabla_h \bar{c}^{\tau} \right) - \pi_h\left( \nabla_h \cdot\left(\bar{\rho}^{\tau} \nabla_h \bar{c}^{\tau}\right) \right) \right\|_{L^2(T)}^2 \right)^{1/2}\left| \phi \right|_{H^1(\Omega)}.
        \end{aligned}
    \end{equation*}

\paragraph*{Estimate for $A_4$:}
We first observe that
    utilizing the identity \eqref{eq:jump_identity} with the choice of 
    \begin{equation*}
        \omega_1 = \frac{\bar{\rho}^{\tau-}}{\bar{\rho}^{\tau+} + \bar{\rho}^{\tau-}}, \omega_2 = \frac{\bar{\rho}^{\tau+}}{\bar{\rho}^{\tau+} + \bar{\rho}^{\tau-}}
    \end{equation*}
    yields
    \begin{equation} \nonumber
        \llbracket \bar{\rho}^{\tau} \nabla_h \bar{c}^{\tau} \rrbracket = \gamma_{\bar{\rho}^{\tau}} \llbracket \nabla_h \bar{c}^{\tau} \rrbracket + \llbracket \bar{\rho}^{\tau} \rrbracket \dc{ \nabla_h \bar{c}^{\tau}  }_{\bar{\omega}}.
    \end{equation}
    Moreover, we have
    $
        \gamma_{\bar{\rho}^{\tau}} \leq \left\| \bar{\rho}^{\tau} \right\|_{L^\infty(\Omega)}
    $
    for all $F \in \mathcal{F}_h^i$.
    Thus, we obtain
    \begin{equation*}
        \begin{aligned}
            &\sum_{F \in \mathcal{F}^i_h} \int_F \llbracket \bar{\rho}^{\tau}\nabla_h \bar{c}^{\tau} \rrbracket\cdot \mathrm{n}_F \dc{ \phi - \pi_h\phi  }_{\bar{\omega}} \\
            &= \sum_{T \in \mathcal{T}_h}\sum_{F \in \mathcal{F}_T} \int_F \bigl( \gamma_{\bar{\rho}^{\tau}}\llbracket \nabla_h \bar{c}^{\tau} \rrbracket + \llbracket \bar{\rho}^{\tau} \rrbracket \dc{ \nabla_h \bar{c}^{\tau}  }_{\bar{\omega}} \bigr) \cdot \mathrm{n}_F \bar{\omega}_{T,F}\left( \phi -\pi_h \phi \right)\\
            &\leq \left\| \bar{\rho}^{\tau} \right\|_{L^\infty(\Omega)} \sum_{T \in \mathcal{T}_h}\sum_{F \in \mathcal{F}_T}  \left\| \llbracket \nabla_h \bar{c}^{\tau} \rrbracket \cdot \mathrm{n}_F \right\|_{L^2(F)} \left\| \phi - \pi_h \phi \right\|_{L^2(F)} \\
            &\quad + \left\| \nabla_h \bar{c}^{\tau} \right\|_{L^\infty(\Omega)} \sum_{T \in \mathcal{T}_h}\sum_{F \in \mathcal{F}_T} \left\| \llbracket \bar{\rho}^{\tau} \rrbracket \right\|_{L^2(F)} \left\| \phi - \pi_h \phi \right\|_{L^2(F)}. 
            \end{aligned}
    \end{equation*}  
        Now applying Lemma \ref{thm:opL2projmf} gives
    \begin{multline*}
            \sum_{F \in \mathcal{F}^i_h} \int_F \llbracket \bar{\rho}^{\tau}\nabla_h \bar{c}^{\tau} \rrbracket\cdot \mathrm{n}_F \dc{ \phi - \pi_h\phi  }_{\bar{\omega}} \\
            \leq \left\| \bar{\rho}^{\tau} \right\|_{L^\infty(\Omega)} \sum_{T \in \mathcal{T}_h}\sum_{F \in \mathcal{F}_T} C_{app}'' h_T^{1/2} \left\| \llbracket \nabla_h \bar{c}^{\tau} \rrbracket \cdot \mathrm{n}_F \right\|_{L^2(F)} | \phi |_{H^1(T)} \\
             + \left\| \nabla_h \bar{c}^{\tau} \right\|_{L^\infty(\Omega)} \sum_{T \in \mathcal{T}_h}\sum_{F \in \mathcal{F}_T} C_{app}'' h_T^{1/2} \left\| \llbracket \bar{\rho}^{\tau} \rrbracket \right\|_{L^2(F)} | \phi |_{H^1(T)} 
            =: A_4',
        \end{multline*}
    where $\bar{\omega}_{T,F}$ is the corresponding weight coefficient on each $T\in \mathcal{T}_h$ and $F \in \mathcal{F}_T$.
    
 \paragraph{Estimate for $A_5$:}
    We have
    \begin{equation} \nonumber
        \begin{aligned}
            &\sum_{F \in \mathcal{F}_h^i} \sigma\frac{\gamma_{\bar{\rho}^{\tau}}}{h_F}\int_F \llbracket \bar{c}^{\tau} \rrbracket \llbracket \phi - \pi_h \phi \rrbracket
            \\
            &= \sum_{T \in \mathcal{T}_h} \sum_{F \in \mathcal{F}_T} \sigma\frac{\gamma_{\bar{\rho}^{\tau}}}{h_F} \epsilon_{T, F}\int_{F} \llbracket \bar{c}^{\tau} \rrbracket \left( \phi - \pi_h \phi|_T \right) \\
            &\leq \left\| \bar{\rho}^{\tau} \right\|_{L^\infty(\Omega)}\sum_{T \in \mathcal{T}_h} \sum_{F \in \mathcal{F}_T} \frac{\sigma}{h_F} \left\| \llbracket \bar{c}^{\tau} \rrbracket \right\|_{L^2(F)} \left\| \phi - \pi_h \phi|_T \right\|_{L^2(F)} \\
            \text{wh}& \text{ere $\epsilon_{T,F}$ is the number $1$ or $-1$ determined by the choice of $T \in \mathcal{T}_h$ and $F \in \mathcal{F}_T$.}\\ 
            \text{No}& \text{w using Lemma \ref{thm:opL2projmf} we obtain} \\
            &\leq \left\| \bar{\rho}^{\tau} \right\|_{L^\infty(\Omega)}\sum_{T \in \mathcal{T}_h} \sum_{F \in \mathcal{F}_T} C_{app}'' h_T^{1/2}\frac{\sigma}{h_F}\left\| \llbracket \bar{c}^{\tau} \rrbracket \right\|_{L^2(F)}\left| \phi \right|_{H^1(T)} 
            =:A_5'.
        \end{aligned} 
    \end{equation}

 \paragraph{Estimate for  $A_6$:}
    We have
    \begin{equation} \nonumber
        \begin{aligned}
            &\sum_{F \in \mathcal{F}_h^i} \int_F\llbracket \bar{c}^{\tau} \rrbracket \dc{\bar{\rho}^{\tau}\nabla_h \pi_h \phi }_{\omega} \cdot \mathrm{n}_F  \\
            &= \sum_{F \in \mathcal{F}_h^i} \int_F \gamma_{\bar{\rho}^{\tau}} \llbracket \bar{c}^{\tau} \rrbracket  \dc{ \nabla_h\pi_h\phi   } \cdot \mathrm{n}_F \\
            & = \frac{1}{2} \sum_{T \in \mathcal{T}_h}\sum_{F \in \mathcal{F}_T} \int_F \gamma_{\bar{\rho}^{\tau}} \llbracket \bar{c}^{\tau} \rrbracket \nabla_h \pi_h \phi \cdot \mathrm{n}_F \\
            &\leq \left\| \bar{\rho}^{\tau} \right\|_{L^\infty(\Omega)}\sum_{T \in \mathcal{T}_h}\sum_{F \in \mathcal{F}_T}  \left\| \llbracket \bar{c}^{\tau} \rrbracket \right\|_{L^2(F)} \left\| \nabla_h \pi_h \phi \cdot \mathrm{n}_F\right\|_{L^2(F)}  \\
            &\leq\left\| \bar{\rho}^{\tau} \right\|_{L^\infty(\Omega)}\sum_{T \in \mathcal{T}_h}\sum_{F \in \mathcal{F}_T} \frac{C_{tr}}{h_T^{1/2}} \left\| \llbracket \bar{c}^{\tau} \rrbracket \right\|_{L^2(F)} \left\| \nabla_h \pi_h \phi \right\|_{L^2(T)}  =:A_6'
        \end{aligned}
    \end{equation}
    where $C_{tr}$ is the constant of the trace inequality applied for $F \subset \partial T$.

By summing up the terms $A_2', A_4', A_5',$ and $A_6'$, applying the Cauchy-Schwarz inequality, and gathering relevant terms, we obtain:
\begin{equation*}
    \begin{aligned}
      &  A_2' + A_4' + A_5' + A_6' \leq \\& \left\| \bar{\rho}^{\tau} \right\|_{L^\infty(\Omega)} \left( \sum_{T \in \mathcal{T}_h} \left| \bar{c}^{\xi} - \bar{c}^{\tau}  \right|_{H^1(T)}^2 + \sum_{T \in \mathcal{T}_h}\sum_{F \in \mathcal{F}_T} C_{app}''^2 h_T \left\| \llbracket \nabla_h \bar{c}^{\tau} \rrbracket \cdot \mathrm{n}_F \right\|_{L^2(F)}^2 \right.\\ 
        & \left. + \sum_{T \in \mathcal{T}_h} \sum_{F \in \mathcal{F}_T} C_{app}''^2 h_T \frac{\sigma^2}{h_F^2} \left\| \llbracket \bar{c}^{\tau} \rrbracket \right\|_{L^2(F)}^2 + \sum_{T \in \mathcal{T}_h}\sum_{F \in \mathcal{F}_T} \frac{C_{tr}^2}{h_T} \left\| \llbracket \bar{c}^{\tau} \rrbracket \right\|_{L^2(F)}^2 \right)^{1/2} \\
        &\quad \times\left( \sum_{T\in \mathcal{T}_h}\sum_{F\in \mathcal{F}_T} \left| \phi \right|_{H^1(T)}^2 + \sum_{T \in \mathcal{T}_h}\sum_{F \in \mathcal{F}_T} \left\| \nabla_h \pi_h \phi  \right\|_{L^2(T)}^2 \right)^{1/2} \\
        & + \left\| \nabla_h \bar{c}^{\tau} \right\|_{L^\infty(\Omega)} \left( \sum_{T \in \mathcal{T}_h}\sum_{F \in \mathcal{F}_T} C_{app}''^2 h_T \left\| \llbracket \bar{\rho}^{\tau} \rrbracket \right\|_{L^2(F)}^2 \right)^{1/2} \left( \sum_{T\in \mathcal{T}_h}\sum_{F\in \mathcal{F}_T} \left| \phi \right|_{H^1(T)}^2 \right)^{1/2}.
    \end{aligned}
\end{equation*}
Since $\left\{ \mathcal{T}_h \right\}_{h>0}$ is shape- and contact-regular, there exists a mesh regularity parameter $\delta > 0$ such that for any $T \in \mathcal{T}_h$ and any $F \in \mathcal{F}_T$,
$
    \delta h_T \leq h_F
$. Moreover, we have
\begin{equation} \nonumber
    \left\| \nabla_h \pi_h \phi  \right\|_{L^2(T)} \leq \left( C_{app}' + 1 \right) \left| \phi \right|_{H^1(T)}.
\end{equation}
Now we can rewrite the expression as:
\begin{equation} \nonumber
    \begin{aligned}
        &A_2' + A_4' + A_5' + A_6' \leq \\&  
        \Bigg[ N_\partial^{1/2}\left\{ \left( C_{app}' + 1 \right)^2 + 1 \right\}^{1/2} \left\| \bar{\rho}^{\tau} \right\|_{L^\infty(\Omega)} \Bigg( \sum_{T \in \mathcal{T}_h} \left| \bar{c}^{\xi} - \bar{c}^{\tau}  \right|_{H^1(T)}^2 
        \\ 
        & + N_\partial C_{app}''^2\sum_{F \in \mathcal{F}_h^i}  h_F \left\| \llbracket \nabla_h \bar{c}^{\tau} \rrbracket \right\|_{L^2(F)}^2  + N_\partial \left( C_{app}''^2\sigma^2 \delta^{-1} + C_{tr}^2 \right) \sum_{F \in \mathcal{F}_h^i} h_F^{-1} \left\| \llbracket \bar{c}^{\tau} \rrbracket \right\|_{L^2(F)}^2 \Bigg)^{1/2}   
        \\
        & + N_\partial C_{app}'' \left\| \nabla_h \bar{c}^{\tau} \right\|_{L^\infty(\Omega)} \left( \sum_{F \in \mathcal{F}_h^i} h_F \left\| \llbracket \bar{\rho}^{\tau} \rrbracket \right\|_{L^2(F)}^2 \right)^{1/2}
        \Bigg]\left| \phi \right|_{H^1(\Omega)}
    \end{aligned}
\end{equation}
where $N_{\partial}:=\max _{T \in \mathcal{T}_h} \operatorname{card}\left(\mathcal{F}_T\right)$.
Note that using \eqref{eq:cest2} we have:
\begin{equation} \nonumber
    \begin{aligned}
        & \sum_{T \in \mathcal{T}_h} \left| \bar{c}^{\xi} - \bar{c}^{\tau}  \right|_{H^1(T)}^2 + N_\partial C_{app}''\sum_{F \in \mathcal{F}_h^i}  h_F \left\| \llbracket \nabla_h \bar{c}^{\tau} \rrbracket \right\|_{L^2(F)}^2 \\ 
        &\quad  + N_\partial \left( C_{app}''^2\sigma^2 \delta^{-1} + C_{tr}^2 \right) \sum_{F \in \mathcal{F}_h^i} h_F^{-1} \left\| \llbracket \bar{c}^{\tau} \rrbracket \right\|_{L^2(F)}^2
        \\
        &\leq C_{max} \left( \sum_{T \in \mathcal{T}_h} \left| \bar{c}^{\xi} - \bar{c}^{\tau}  \right|_{H^1(T)}^2 + \sum_{F \in \mathcal{F}_h^i}  h_F \left\| \llbracket \nabla_h \bar{c}^{\tau} \rrbracket \right\|_{L^2(F)}^2 + \sigma^2\sum_{F \in \mathcal{F}_h^i} h_F^{-1} \left\| \llbracket \bar{c}^{\tau} \rrbracket \right\|_{L^2(F)}^2 \right) 
        \\
        &\leq C_{max} \left( C_{ell}^2C_0^2\operatorname{E}_0[\bar{\rho}^{\tau}
        ]^2 + \tilde{C}_1^2\operatorname{\tilde{E}}_1[\bar{c}^{\tau}, \bar{\rho}^{\tau}]^2 \right)
    \end{aligned}
\end{equation}
where $C_{max}:= \max\left\{ 1, N_\partial C_{app}'', N_\partial \left( C_{app}''^2 \delta^{-1} + C_{tr}^2 \right) \right\}$.

In conclusion, we obtain the desired result by combining all terms.
\end{proof}

\section{A posteriori error estimator}\label{sec:aposteriori}
We now present our main result, i.e., an a posteriori error estimator for the numerical approximation $\left\{ \left( \rho_h^n, c_h^n \right) \right\}_{n=0}^N$ obtained from the scheme \eqref{eq:fully_discrete}. The main idea is to split the norm of the error into two norms using the reconstruction and to estimate the norms using the stability framework (Section \ref{section:a_stability_framework} and \ref{section:aposteriori}) and the error estimators for reconstructions (Section \ref{sec:reconstruction}), respectively.

Since the stability estimate of Theorem \ref{thm:stabilityest} is conditional, it is necessary to establish computable upper bounds for quantities $A$ and $E$ of \eqref{eq:AE}. Thus we introduce the following quantities:
\begin{equation} \label{eq:barAbarE}
    \begin{aligned}
        A^{\xi}:=& 2 \left\| \rho_0 - \rho_h^0 \right\|_{L^2(\Omega)}^2 + 2C_0^2 \operatorname{E}_0[\rho_h^0]^2 + \int_0^T \operatorname{E}_{R^{\xi}}^2 (s) ds, \\
        E^{\xi}:=& \exp{\left( \int_0^T a^{\xi}(s) ds \right)},
    \end{aligned}
\end{equation}
where
\begin{multline*}
    a^{\xi}(s): = 6C_S^2C_S'^2C_{ell}^2\biggl( C_0^2 \operatorname{E}_0[\bar{\rho}^{\tau}(s)]^2 + C_1^2 \operatorname{E}_1[\bar{\rho}^{\tau}(s)]^2 
    + \left\| \bar{\rho}^{\tau}(s, \cdot) \right\|_{L^2(\Omega)}^2 + \left\| \bar{\rho}^{\tau}(s, \cdot) \right\|_{\mathrm{dG}} \biggr) \\
    + 6C_S''C_{ell}^2 \biggl( C_1^2 \operatorname{E}_1[\bar{\rho}^{\tau} (s)]^2 + \left\| \bar{\rho}^{\tau} (s) \right\|_{\mathrm{dG}} \biggr) + 1/3.
\end{multline*}
The estimators $\operatorname{E}_0$ and $\operatorname{E}_1$ are defined in \eqref{rhoest2} and \eqref{rhoxest1}, respectively. The estimator
$\operatorname{E}_{R^{\xi}}$ is defined as in \eqref{eq:R_rho}.
Constants $C_S$, $C_S'$, and $C_S''$ represent the embedding constants from $H^1 \rightarrow L^6$, $H^1 \rightarrow L^3$, and $H^2 \rightarrow L^\infty$, respectively. $C_{ell}$ denotes the constant of elliptic regularity, and $C_0$ and $C_1$ are the constants in \eqref{rhoest1} and \eqref{rhoxest1}, respectively.
It is straightforward that 
\begin{equation} \label{eq:AEbound}
    A \leq A^{\xi} \text{ and } E \leq E^{\xi}. 
\end{equation}

\begin{theorem}[A posteriori error estimate] \label{thm:aposteriori}
    Let $\left( \rho, c \right)$ be a weak solution  to \eqref{eq:KS1}--\eqref{eq:ic} and let $\left\{ (\rho_h^n, c_h^n) \right\}_{n=0}^N$ be the numerical solution from \eqref{eq:fully_discrete}. Let $A^{\xi}$ and $E^{\xi}$ be as defined in \eqref{eq:barAbarE}, and let $B = 2C_S'C_SC_{ell}$ as in Theorem \ref{thm:stabilityest}.
    If the condition 
    \begin{equation} \label{eq:condition}
        8 A^\xi E^\xi ( 8B (1 +T) E^\xi)^{2} \leq 1
    \end{equation}
    holds, we have
    \begin{multline}\label{eq:fullestimator}
            \left\| \rho - \rho_h^n \right\|_{L^\infty(0,T;L^2(\Omega))}^2 + \int_0^T \left\| \rho(s, \cdot) - \bar{\rho}^{\tau}(s, \cdot) \right\|_{\mathrm{dG}}^2 ds \\ 
            \leq 16 A^{\xi}E^{\xi}
            + 4 C_0^2 \left\| \operatorname{E}_0\left[ \bar{\rho}^{\tau}(s, \cdot) \right] \right\|_{L^\infty(0,T)}^2 + 4 \left\| \bar{\rho}^{\tau} - \rho_h^n \right\|_{L^\infty(0,T;L^2(\Omega))}^2 \\
            + 2C_1^2 \left\| \operatorname{E}_1\left[ \bar{\rho}^{\tau}(s, \cdot)
            \right] \right\|_{L^2(0,T)}^2
    \end{multline}
    where $\operatorname{E}_{0}$ and $\operatorname{E}_{1}$ are defined in \eqref{rhoest2}, \eqref{rhoxest1}, and \eqref{eq:R_rho}, respectively.
\end{theorem}
\begin{remark} [Optimality of the estimator]
    Based on the arguments given in Remark \ref{rmk:optimal}, the right-hand side of \eqref{eq:fullestimator} is expected to scale optimally, i.e., with the order $h^k + \tau$, as long as the exact solution is smooth.
    This is confirmed in our numerical experiments presented in Section \ref{sec:numerical}.
\end{remark}

\begin{remark} [Computable constants]
    All constants appearing in Theorem \ref{thm:aposteriori} can be evaluated on a convex polygonal domain. The Sobolev constants are provided in \cite{2017Mizuguchi}. For the constant of the Poincar\'e-type inequality, we refer to \cite{1960Payne}. The constant of elliptic regularity can also be explicitly computed \cite[Chapter 2]{1992Grisvard}. The constants for the discrete trace and inverse inequalities can be evaluated, see \cite{2003Warburton}. We refer to \cite[Theorem 5.45]{Di_Pietro_2012} for the constants of $E_1$. The constants of $E_{-1}$ and $E_0$ can be evaluated in a similar manner, see \cite{Makridakis2003}.
\end{remark}

\begin{proof}[Proof of Theorem \ref{thm:aposteriori}]
    Let $(\bar{\rho}^{\xi}, \bar{c}^{\xi})$ be the space-time reconstruction of $\left\{ (\rho_h^n, c_h^n) \right\}_{n=0}^N$.
    Then we have
    \begin{equation} \nonumber
        \begin{aligned}
            \left\| \rho - \rho_h^n \right\|_{L^\infty(0,T;L^2(\Omega))}^2 &\leq 2 \left\| \rho - \bar{\rho}^{\xi} \right\|_{L^\infty(0,T;L^2(\Omega))}^2 + 2 \left\| \bar{\rho}^{\xi} - \rho_h^n  \right\|_{L^\infty(0,T;L^2(\Omega))}^2 \\
            & =: I_1 + I_2
        \end{aligned}
    \end{equation}
    and
    \begin{multline} \nonumber
        \int_0^T \left\| \rho (t, \cdot) - \bar{\rho}^{\tau} (t, \cdot) \right\|_{\mathrm{dG}}^2 dt \\ 
        \leq 2 \int_0^T \left| \rho (t, \cdot) - \bar{\rho}^{\xi} (t, \cdot) \right|_{H^1(\Omega)}^2 dt + 2 \int_0^T \left\| \bar{\rho}^{\xi} (t, \cdot) - \bar{\rho}^{\tau} (t, \cdot) \right\|_{\mathrm{dG}}^2 dt \\
        =: J_1 + J_2.
    \end{multline}
    We first estimate the sum of the terms $I_1$ and $J_1$. Note that \eqref{eq:condition} guarantee that the condition \eqref{eq:estcondition} in Theorem \ref{thm:stabilityest} is satisfied.
    Thus we have
    \begin{equation} \nonumber
        \begin{aligned}
            I_1 + J_1 &\leq 16 \left(    \|  \rho (0,\cdot) - \bar{\rho}^{\xi} (0,\cdot)\|_{L^2(\Omega)}^2 + \int_0^{T} \| R^{\xi}(t,\cdot)\|_{H^{-1}(\Omega)}^2  dt   \right) \\
            &\times \exp \left( \int_0^T 3 C_S^2C_S'^2C_{ell}^2 \|\bar{\rho}^{\xi}(t,\cdot)\|_{H^1(\Omega)}^2  + 3 C_S''^2C_{ell}^2\|\nabla \bar{\rho}^{\xi}(t,\cdot)\|_{L^2(\Omega)}^2 + \frac 1 3 dt\right).
        \end{aligned}
    \end{equation}
    Now by \eqref{eq:AEbound}, we obtain
    \begin{equation} \nonumber
        I_1 + J_1 \leq 16 A^{\xi}E^{\xi}.
    \end{equation}
    Next, we estimate the term $I_2$ and $J_2$:
    \begin{equation} \nonumber
        \begin{aligned}
            I_2 &\leq 4 \left\| \bar{\rho}^{\xi} - \bar{\rho}^{\tau} \right\|_{L^\infty(0,T;L^2(\Omega))}^2 + 4 \left\| \bar{\rho}^{\tau}  - \rho_h^n \right\|_{L^\infty(0,T;L^2(\Omega))}^2 \\
            &\leq 4 C_0^2 \left\| \operatorname{E}_0\left[ \bar{\rho}^{\tau}(t, \cdot) \right] \right\|_{L^\infty(0,T)}^2  + 4 \left\| \bar{\rho}^{\tau}  - \rho_h^n \right\|_{L^\infty(0,T;L^2(\Omega))}^2
        \end{aligned}
    \end{equation}
    and
    \begin{equation} \nonumber
        J_2 \leq 2C_1^2 \left\| \operatorname{E}_1\left[ \bar{\rho}^{\tau}(t, \cdot) \right] \right\|_{L^2(0,T)}^2.
    \end{equation}

    Hence we obtain the desired result.
\end{proof}

\begin{remark}[Adaptivity]
    Adaptive strategies based on a posteriori error estimates containing analogous exponential terms can be found in
    \cite{2011Bartels, Cangiani_2016, 2023Giesselmann}.
\end{remark}

\section{Numerical experiments} \label{sec:numerical}
In this section, we show the results of numerical experiments to validate the optimal scaling behavior of the error estimator \eqref{eq:fullestimator}.
The implementations were carried out using Python.

\begin{definition} [Estimated order of convergence]
    Given two sequences $a(i)$ and $h(i) \searrow 0$, we define the \emph{estimated order of convergence (EOC)} to be the local slope of the $\log a(i)$ vs. $\log h(i)$ curve, i.e.,
    \begin{equation*}
    \operatorname{EOC}(a, h ; i):=\frac{\log (a(i+1) / a(i))}{\log (h(i+1) / h(i))}.
    \end{equation*}
\end{definition}

Let $\Omega \subset \mathbb{R}^2$ be the unit square $(0,1)\times(0,1)$ and let $\rho_0(x,y)$ be defined as
    \begin{equation} \nonumber
        \rho_0(x, y) = 10^{-3} \exp \left( - \frac{(x - x_0)^2 + (y - y_0)^2}{10^{-2}} \right).
    \end{equation}
We choose the center point $x_0 = 0.5$ and $y_0 = 0.5$.
The initial data $\rho_0$ yields a radially symmetric solution $\rho = \rho(t, x, y)$, which blows up in finite time at the center as illustrated in Figure \ref{fig:blowup}. A sequence of approximate solutions $\left\{ \rho_{h(i)}^n \right\}_{n=0}^{N(i)}$ obtained from \eqref{eq:fully_discrete} is computed using mesh width $h = 2^{-i}\operatorname{diam}(\Omega)$ and time step size $\tau = 2^{2-i}\operatorname{length}([0,T])$, where $i=4,5,\cdots, 9$. Here we fix $T=0.0045$.

\begin{figure} 
    \centering
    \begin{subfigure}{0.45\textwidth}
        \centering
        \includegraphics[width=\linewidth]{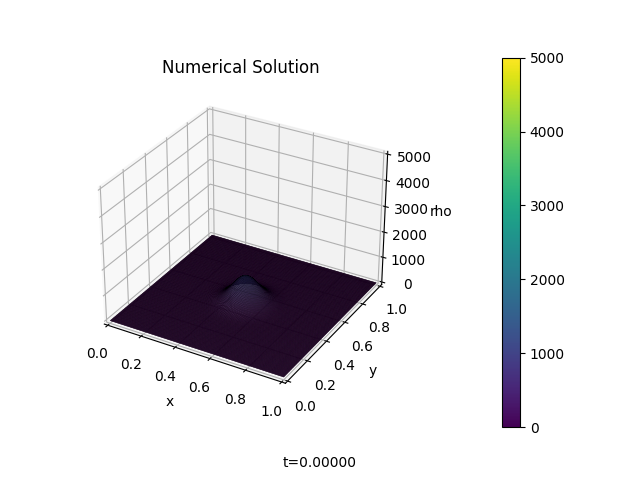}
    \end{subfigure}
    \hfill
    \begin{subfigure}{0.45\textwidth}
        \centering
        \includegraphics[width=\linewidth]{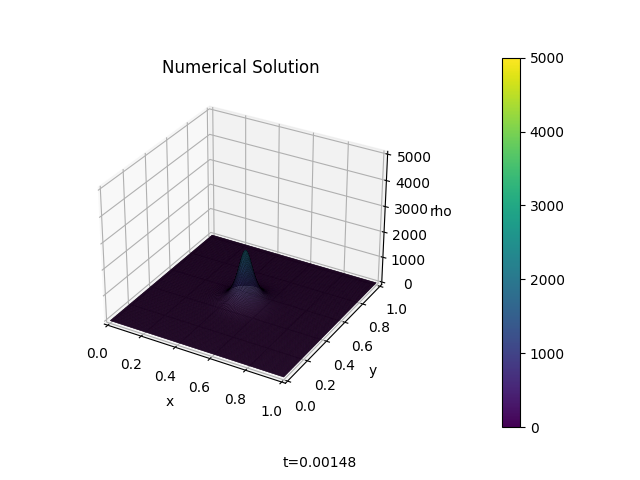}
    \end{subfigure}

    \vspace{1em}
    \begin{subfigure}{0.45\textwidth}
        \centering
        \includegraphics[width=\linewidth]{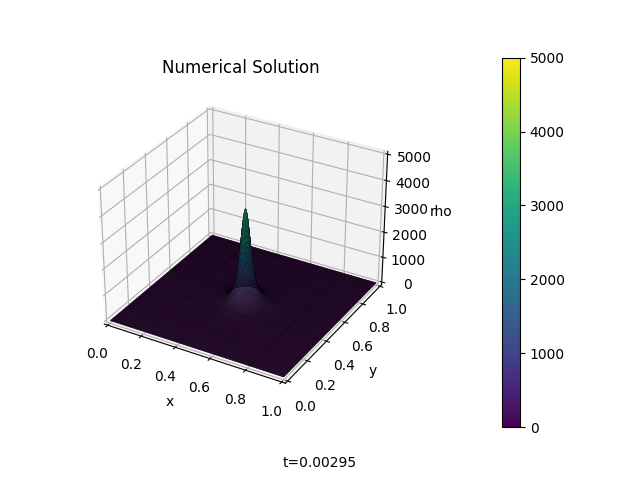}
    \end{subfigure}
    \hfill
    \begin{subfigure}{0.45\textwidth}
        \centering
        \includegraphics[width=\linewidth]{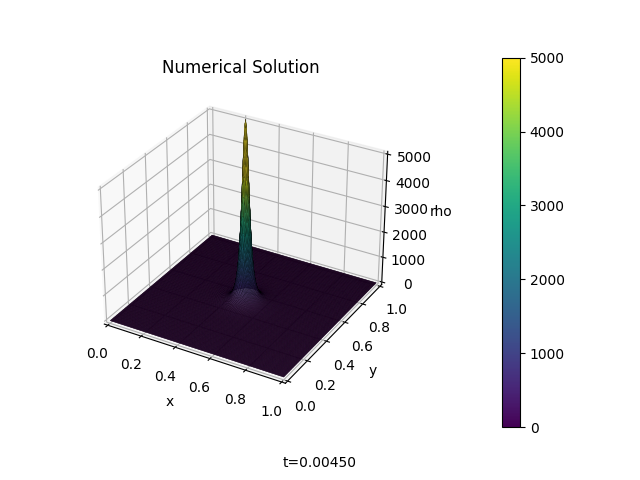}
    \end{subfigure}

    \caption{Simulation snapshots for $\left\{ \rho_h^n \right\}_{n=0}^N$ with $k = 1$ and $i=8$ at different times.} 
    \label{fig:blowup}
\end{figure}

Since the computation of \eqref{eq:fullestimator} results in overflows due to the presence of $E^{\xi}$, we only perform experiments on the key components: $\operatorname{E}_0$, $\operatorname{E}_1$, $\operatorname{E}_{-1}$, $\operatorname{E}_{R^{\xi}}$, and $\int_0^T a^{\xi}$. 
To compute the $L^2$-norms (in time) of the estimators, the reconstruction $(\bar{\rho}^{\tau}, \bar{c}^{\tau})$ of $\left\{ (\rho_h^n, c_h^n) \right\}_{n=0}^N$ is used.
The results in Tables \ref{tab:est_cond_1} to \ref{tab:2nd_order_Es} show the EOCs that expected from our analysis. Note that since the integrand $a^{\xi}$ incorporates the broken $H^1$-norms of $\left\{ \rho_h^n \right\}_{n=0}^N$, the full error estimator is sensitive to the gradient of the exact solution.

\begin{remark} [Round-off errors]
    Considering the precision limitations of the \texttt{float64} data type in NumPy, we observe that the $L^2$ norm of the jump of $\{c_h^n\}_{n=0}^N$ cannot decrease below a certain threshold. Specifically, the squared jump of $\{c_h^n\}_{n=0}^N$ is constrained to a minimum of $10^{-16}$, which aligns with the machine epsilon of \texttt{float64} in NumPy, approximately $2.22 \times 10^{-16}$. As a result, the estimator $\tilde{E}_1[\bar{c}^{\tau}, \bar{\rho}^{\tau}]$ and subsequently $E_{R^{\xi}}$ experience a decline of the EOC due to round-off errors; see the row $i=8$ of Table \ref{tab:2nd_order_Es_tilde}.
\end{remark}

Thus the full error estimator is expected to scale with order $h^k+\tau$ as long as the exact solution is sufficiently smooth and round-off errors do not dominate.

\begin{table} [!]
    \centering
    \begin{tabular}{@{}lccc@{}}
        \toprule
        $i$ & $\int_0^T a^{\xi}(t)dt$ & $E_0[\rho_h^0]$ & EOC \\
        \midrule
        $4$ & 1.63E+08 & 4.33E+02 & 2.068 \\
        $5$ & 1.61E+08 & 1.03E+02 & 2.051 \\
        $6$ & 9.10E+07 & 2.49E+01 & 2.022 \\
        $7$ & 3.13E+07 & 6.14E+00 & 2.010 \\
        $8$ & 9.42E+06 & 1.52E+00 & 2.005 \\
        $9$ & 3.43E+06 & 3.80E-01 & - \\
        \bottomrule
    \end{tabular}      
    \caption{Polynomial degree $k = 1$.
        Tests were conducted for $\int_0^T a^{\xi}(t)dt$ and $E_0[\rho_h^0]$ using different values of $ i $. The end time is $T=0.0045$. Note that since the integrand $ a^{\xi} $ incorporates the broken $ H^1 $-norms of $ \left\{ \rho_h^n \right\}_{n=0}^N $, it is sensitive to the gradient of the exact solution. The EOCs for $E_0[\rho_h^0]$ approach the theoretical order.
    }
    \label{tab:est_cond_1}
\end{table}

\begin{table} [!]
    \centering
    \begin{tabular}{@{}lccc@{}}
        \toprule
        $i$ & $\int_0^T a^{\xi}(t)dt$ & $E_0[\rho_h^0]$ & EOC \\
        \midrule
        $4$ & 8.00E+08 & 1.77E+02 & 2.936 \\
        $5$ & 1.81E+08 & 2.31E+01 & 2.898 \\
        $6$ & 1.35E+07 & 3.10E+00 & 2.915 \\
        $7$ & 2.24E+06 & 4.12E-01 & 2.944 \\
        $8$ & 1.50E+06 & 5.35E-02 & 2.967 \\
        $9$ & 1.43E+06 & 6.84E-03 & - \\
        \bottomrule
    \end{tabular}    
    
    \caption{Polynomial degree $k = 2$.
        Tests were conducted for $\int_0^T a^{\xi}(t)dt$ and $E_0[\rho_h^0]$ using different values of $ i $. The end time is $T=0.0045$. Note that since the integrand $ a^{\xi} $ incorporates the broken $ H^1 $-norms of $ \left\{ \rho_h^n \right\}_{n=0}^N $, it is sensitive to the gradient of the exact solution. The EOCs for $E_0[\rho_h^0]$ approach the theoretical order.
    }
    \label{tab:est_cond_2}
\end{table}

\begin{table} [!]
    \centering
    \begin{tabular}{@{}lcccccc@{}}
        \toprule
        $i$ & $\left\| \operatorname{E}_{0} \right\|_{L^\infty(0,T)}$ & EOC & $\left\| \operatorname{E}_{1} \right\|_{L^2(0,T)}$ & EOC & $\left\| \operatorname{E}_{R^{\xi}} \right\|_{L^2(0,T)}$ & EOC \\
        \midrule
        $4$ & 7.14E+01 & 1.015 & 8.08E+02 & 0.015 & 1.78E+06 & 0.615 \\
        $5$ & 3.54E+01 & 1.416 & 8.00E+02 & 0.416 & 1.16E+06 & 1.450 \\
        $6$ & 1.33E+01 & 1.793 & 5.99E+02 & 0.793 & 4.26E+05 & 1.918 \\
        $7$ & 3.82E+00 & 1.952 & 3.46E+02 & 0.952 & 1.13E+05 & 1.627 \\
        $8$ & 9.88E-01 & 1.995 & 1.79E+02 & 0.995 & 3.65E+04 & 1.243 \\
        $9$ & 3.78E-01 & -    & 8.98E+01     & -    & 1.54E+04 & -    \\
        \bottomrule
    \end{tabular}

    \caption{Polynomial degree $k = 1$. The experimental results for the norms of $E_{0}[\bar{\rho}^{\tau}(t, \cdot)]$, $E_{1}[\bar{\rho}^{\tau}(t, \cdot)]$, $E_{R^{\xi}}[\bar{\rho}^{\tau}(t, \cdot)]$ are presented. }
    \label{tab:1st_order_Es_part1}
\end{table}


\begin{table} [!]
    \centering
    \begin{tabular}{@{}lcccccc@{}}
        \toprule
        $i$ & $\left\| \operatorname{E}_{0} \right\|_{L^\infty(0,T)}$ & EOC & $\left\| \operatorname{E}_{1} \right\|_{L^2(0,T)}$ & EOC & $\left\| \operatorname{E}_{R^{\xi}} \right\|_{L^2(0,T)}$ & EOC \\
        \midrule
        $4$ & 1.58E+02 & 2.077 & 1.79E+03 & 1.077 & 1.22E+07 & 2.777 \\
        $5$ & 3.75E+01 & 2.959 & 8.48E+02 & 1.959 & 1.78E+06 & 4.429 \\
        $6$ & 4.82E+00 & 3.021 & 2.18E+02 & 2.021 & 8.25E+04 & 3.577 \\
        $7$ & 5.94E-01 & 3.024 & 5.37E+01 & 2.024 & 6.92E+03 & 2.577 \\
        $8$ & 7.30E-02 & 3.013 & 1.32E+01 & 2.013 & 1.16E+03 & 1.669 \\
        $9$ & 3.78E-01 & -    & 3.27E+00 & -    & 3.65E+02 & -    \\
        \bottomrule
    \end{tabular}

    \caption{Polynomial degree $k=2$. The experimental results for the norms of $E_{0}[\bar{\rho}^{\tau}(t, \cdot)]$, $E_{1}[\bar{\rho}^{\tau}(t, \cdot)]$, $E_{R^{\xi}}[\bar{\rho}^{\tau}(t, \cdot)]$ are presented. }
    \label{tab:2nd_order_Es} 
\end{table}

\begin{table}[!ht]
    \centering
    \begin{minipage}[t]{0.45\linewidth}
        \centering
        \begin{tabular}{@{}lcc@{}}
            \toprule
            $i$ & $\left\| E_{0}[\partial_t\bar{\rho}^{\tau}] \right\|_{L^2(0,T)}$ & EOC \\
            \midrule
            $4$ & 1.95E+04 & 0.275 \\
            $5$ & 1.61E+04 & 1.012 \\
            $6$ & 7.98E+03 & 1.621 \\
            $7$ & 2.59E+03 & 1.899 \\
            $8$ & 6.95E+02 & 1.980 \\
            $9$ & 1.76E+02 & - \\
            \bottomrule
        \end{tabular}
        \caption{Polynomial degree $k=1$. The experimental results for the norm of $E_{0}[\partial_t\bar{\rho}^{\tau}(t, \cdot)]$ are presented.} \label{tab:time_derivative_Es}
    \end{minipage}%
    \hfill
    \begin{minipage}[t]{0.45\linewidth}
        \centering
        \begin{tabular}{@{}lcc@{}}
            \toprule
            $i$ & $\left\| E_{-1}[\partial_t\bar{\rho}^{\tau}] \right\|_{L^2(0,T)}$ & EOC \\
            \midrule
            $4$ & 7.57E+03 & 2.483 \\
            $5$ & 1.36E+03 & 3.782 \\
            $6$ & 9.85E+01 & 4.010 \\
            $7$ & 6.12E+00 & 4.022 \\
            $8$ & 3.76E-01 & 4.011 \\
            $9$ & 2.33E-02 & - \\
            \bottomrule
        \end{tabular}
        \caption{Polynomial degree $k=2$. The experimental results for the norm of $E_{-1}[\partial_t\bar{\rho}^{\tau}(t, \cdot)]$ are presented.} \label{tab:time_derivative_Es_2}
    \end{minipage}
\end{table}

\begin{table}[!ht]
    \centering
    \begin{minipage}[t]{0.45\linewidth}
        \centering
        \begin{tabular}{@{}lcc@{}}
            \toprule
            $i$ & $\left\| \operatorname{\tilde{E}}_{1} \right\|_{L^2(0,T)}$ & EOC \\
            \midrule
            $4$ & 1.24E+00 & 0.779 \\
            $5$ & 7.22E-01 & 0.915 \\
            $6$ & 3.83E-01 & 0.989 \\
            $7$ & 1.93E-01 & 1.008 \\
            $8$ & 9.59E-02 & 1.008 \\
            $9$ & 4.77E-02 & - \\
            \bottomrule
        \end{tabular}
        \caption{Polynomial degree $k=1$. The experimental results for the norm of $\tilde{E}_{1}[\bar{c}^{\tau}(t, \cdot)]$ are presented.}
    \end{minipage}%
    \hfill
    \begin{minipage}[t]{0.45\linewidth}
        \centering
        \begin{tabular}{@{}lcc@{}}
            \toprule
            $i$ & $\left\| \operatorname{\tilde{E}}_{1} \right\|_{L^2(0,T)}$ & EOC \\
            \midrule
            $4$ & 6.42E-01 & 1.878 \\
            $5$ & 1.75E-01 & 2.065 \\
            $6$ & 4.18E-02 & 2.056 \\
            $7$ & 1.00E-02 & 1.992 \\
            $8$ & 2.52E-03 & 1.065 \\
            $9$ & 1.21E-03 & - \\
            \bottomrule     
        \end{tabular} 
        \caption{Polynomial degree $k=2$. The experimental results for the norm of $\tilde{E}_{1}[\bar{c}^{\tau}(t, \cdot)]$ are presented. This indicates a reduction of the EOC at $i=8$.}
        \label{tab:2nd_order_Es_tilde}
    \end{minipage}
\end{table}


\newpage
\paragraph{Funding} J.G. is grateful for financial support by the German Science Foundation 
(DFG) via grant TRR 154 (\emph{Mathematical modelling, simulation and 
optimization using the example of gas networks}), project C05. The work 
of J.G. is also supported by the Graduate School CE within Computational 
Engineering at Technische Universität Darmstadt.
K. Kwon was supported by the National Research Foundation of Korea (NRF) grant funded by the Korea government (MSIT) (No. 2020R1A4A1018190, 2021R1C1C1011867).

\section*{Declarations}
\paragraph{Conflict of interest} The authors declare no competing interests.



\newpage
\bibliographystyle{abbrvurl}

\bibliography{./references}

\end{document}